\title{Which Haar graphs are Cayley graphs?}
\author{Istv\'an Est\'elyi,$^{a,b}$   \; Toma\v{z} Pisanski$^{c,a,b}$ \\  [+0.75ex]
$^a$ {\small FMF, University of Ljubljana, Jadranska 19, 1000 Ljubljana, Slovenia} \\ [-0.5ex]
$^b$ {\small IAM, University of Primorska, Muzejski trg 2, 6000 Koper, Slovenia}\\[-0.5ex] 
$^c$ {\small FAMNIT, University of Primorska, Glagolja\v{s}ka 8, 6000 Koper, Slovenia} \\ [-0.5ex]
}
\date{}
\newtheorem{thm}{Theorem}
\newtheorem{lem}[thm]{Lemma}
\newtheorem{cor}[thm]{Corollary}
\newtheorem{prop}[thm]{Proposition}
\newtheorem{prob}{Problem}
\newtheorem{defi}{Definition}
\newtheorem{rem}{Remark}
\def\Z{\mathbb{Z}}
\DeclareMathOperator{\cay}{Cay}
\DeclareMathOperator{\aut}{Aut}
\newcommand{\comment}[1]{}
\providecommand*{\xmapstofill@}{%
  \arrowfill@{\mapstochar\relbar}\relbar\rightarrow
}
\providecommand*{\xmapsto}[2][]{%
  \ext@arrow 0395\xmapstofill@{#1}{#2}%
}
\begin{document}

\maketitle

\let\thefootnote\relax\footnote{
The first author was supported by the Young Researcher grant of ARRS (Agencija za raziskovanje Republike Slovenija),
and the ARRS grant no.\ P1-0294. The second author was supported  by the ARRS grant no.\ P1-0294, research project no.\  J1-6720 and by ESF grant Eurocores Eurogiga--GReGAS.
\\  [+0.5ex]
{\it  E-mail addresses:} istvan.estelyi@student.fmf.uni-lj.si (Istv\'an Est\'elyi),  tomaz.pisanski@upr.si (Toma\v{z} Pisanski).
}

\begin{abstract}
For a finite group $G$ and subset $S$ of $G,$ the Haar graph $H(G,S)$ is a bipartite regular graph, defined as a regular $G$-cover of a dipole with $|S|$ parallel arcs labelled by elements of $S$. 
If $G$ is an abelian group, then $H(G,S)$ is well-known to be a Cayley graph; 
however, there are examples of non-abelian groups $G$ and subsets $S$ when this is not the case. In this paper we address the problem of classifying finite non-abelian groups $G$ with the property that every Haar graph $H(G,S)$ is a Cayley graph.  An equivalent condition for $H(G,S)$ to be a Cayley graph of a group containing $G$ is derived 
in terms of $G, S$ and $\aut G$.  It is also shown that the 
dihedral groups, which are solutions  to the above problem, are $\Z_2^2,D_3,D_4$ and 
$D_{5}$.
\medskip

\noindent{\it Keywords:} Haar graph, Cayley graph, dihedral group, generalized dihedral group. \medskip

\noindent{\it MSC 2010:} 20B25 (primary), 05C25, 05E10 (secondary). 
\end{abstract}

\section{Introduction}

All graphs in this paper will be finite and undirected and all groups will be finite.  Recall that, given a group $G$ and a subset $S$ of $G$ with $1_G \notin S$ and 
$S=S^{-1},$ the \emph{Cayley graph} $\cay(G,S)$ is the graph with vertex set $G$ and edges of the form $[g,sg]$ for all $g \in G$ and $s \in S$. A natural generalization of Cayley graphs are the so called Haar graphs introduced by Hladnik et al.\ \cite{HlaMP02} as follows. 
Given a group $G$ and an arbitrary subset $S$ of $G,$ the \emph{Haar graph} $H(G,S)$ is  the voltage graph of a dipole with no loops and $|S|$ parallel edges (from the \emph{white} to the \emph{black} vertex), labeled 
by elements of $S$. More formally, the vertex set of $H(G,S)$ is  
$G \times \{0,1\},$ and the edges are of the form 
$[(g,0),(sg,1)], \; g \in G, s \in S.$
If it is not ambiguous, instead of $[(x,0),(y,1)]\in E(H(G,S))$ we will rather use the notation $(x,0) \sim (y,1)$. The name \emph{Haar graph} comes from the fact that, when $G$ 
is an abelian group the Schur norm of the corresponding adjacency matrix can be easily evaluated via the so-called Haar integral on $G$ (see \cite{Hla99}).  In a more general setting a dipole with the same number of loops and semi-edges on both poles gives rise to a covering graph which is sometimes referred to as a \emph{bi-Cayley graph} of $G$. The generalized Petersen graphs and rose window graphs \cite{W08, KKM10} form notable subfamilies of bi-Cayley graphs (see Fig.~1). For more information on voltage graphs, we refer to \cite{GroT87}. 



\begin{figure}
\begin{center}
\includegraphics[]{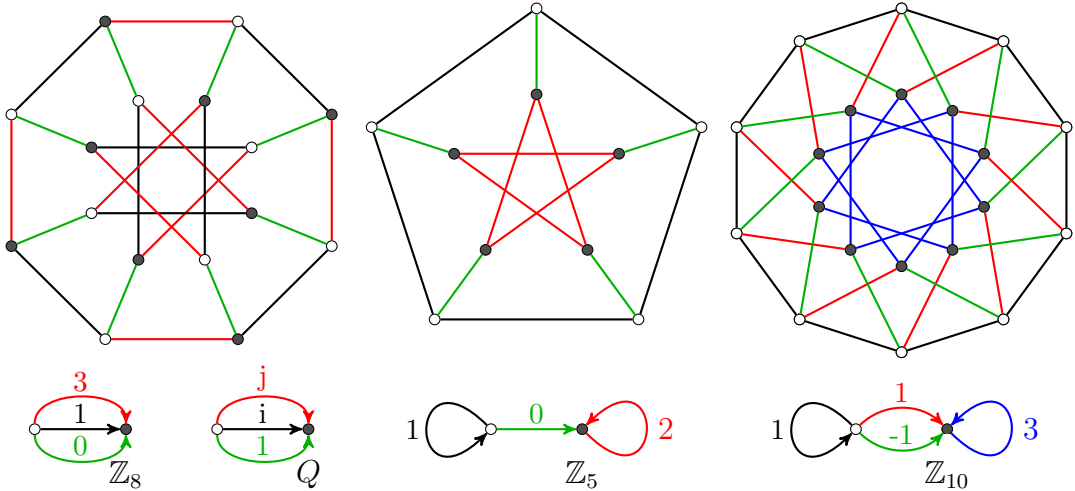}
\end{center}
\centering
\caption{(Left) The M\"{o}bius-Kantor graph as a Haar graph over the cyclic group $\mathbb{Z}_8$ with voltages $S=\{0,1,3\}$ and over the quaternion group $Q$ with voltages $S=\{1,i,j\}$. (Middle) The Petersen graph is a bi-Cayley graph over the cyclic group $\mathbb{Z}_5$, but it is not a Haar graph. (Right) The rose window graph $R_{10}(2,3)$ is a 4-valent bi-Cayley graph.}
\end{figure}

Lately, symmetries of Haar graphs and bi-Cayley graphs have been studied by several authors, cf.\  \cite{DuX00, ExoJ11, HlaMP02, JinL10,KoiK14,KKP14,KKM10, KovMMM09,Lu03,LuWX06,Pis07,W08,ZhouF14}. However, the terminology is not completely unified. To avoid ambiguity, we refer to the specific results of other papers by using the terminology of the present paper. 

Koike and co-authors \cite{KoiK14,KKP14} have studied certain cases where some unexpected automorphism may appear in cyclic Haar graphs.   

Feng and Zhou \cite{ZhouF14} have constructed an infinite family of non-Cayley vertex transitive cubic graphs as bi-Cayley graphs of abelian groups. 

Exoo and Jajcay \cite{ExoJ11} constructed small graphs of large girth, i.e., approximate cages, as Haar graphs. In particular, the (3,30)-cage they constructed as a Haar graph over $SL(2,83)$ with voltages 
$$
  \begin{bmatrix}
    1 & 0  \\
    0 & 1 
  \end{bmatrix},
  \begin{bmatrix}
    0 & 1  \\
    -1 & 6 
  \end{bmatrix},
  \begin{bmatrix}
    1 & 11  \\
    23 & 5
  \end{bmatrix},
$$
 is still the smallest known. It is perhaps of interest to note that the voltage graph and covering graph techniques they use have been used already in the eighties to produce a restricted family of trivalent
graphs of arbitrarily large girth \cite{ShaP82}.  
 
 The concept of Cayley- and bi-Cayley graphs can be further generalized by assuming a higher number of orbits of a semiregular group of automorphisms, e.g., Kutnar et al. \cite{KMMS09} characrterized strongly regular tri-Cayley graphs. 
  
Instead of following this path, we restrict our attention to Haar graphs. Namely, we are interested in determining the exact relationship between Haar graphs and Cayley graphs, which is motivated by their common origins. We also believe it might help better understand the usefulness and limitations of these constructions. Hladnik et al. \cite{HlaMP02} showed that every Haar graph of a cyclic group is a Cayley graph.
It will become apparent that this property can easily be generalized for Haar graphs of 
any abelian group (see  Lemma~\ref{basics}(iii)). On the other hand, Lu et al.\ \cite{LuWX06} have constructed cubic semi-symmetric graphs, i.e., edge- but not vertex-transitive graphs, as Haar graphs of alternating groups. Clearly, as these graphs are not vertex-transitive, they are examples of Haar graphs which are not Cayley graphs.

It is natural to ask which non-abelian groups admit at least one Haar graph 
that is not a Cayley graph, or putting it another way, we pose here the following problem:

\begin{prob}\label{prob1}
Determine the finite non-abelian groups $G$ for which all Haar graphs $H(G,S)$ are  
Cayley graphs.
\end{prob}

Let $X=H(G,S)$, where $G$ is an arbitrary finite group and $S$ be an arbitrary subset 
of $G$.  For $g \in G,$ let $g_R$ be the permutation of $G \times \{0,1\}$ 
defined by $(x,i)^{g_R}=(x g,i),$ and let $G_R = \{ g_R : g \in G\}$. 
It immediately follows that $G_R \le \aut X,$ where $\aut X$ denotes the full automorphism group of $X$.  By the well-known result of Sabidussi, $X$ is a Cayley graph exactly when $\aut X$ contains a subgroup $A$ acting regularly on the vertex set, and in this case we also say 
that $X$ is a Cayley graph of the group $A$. In this paper the primary focus is on the special case when the Haar graph $X$ is a Cayley graph of a group containing $G_R$. 
For this purpose we set the following definition: 

\begin{defi}\label{algcay}
A Haar graph $X=H(G,S)$ is \emph{algebraically Cayley} if $G_R \le A$ for some 
subgroup $A \le \aut X$ acting regularly on the vertex set $V(X)$. 
\end{defi}

In the next section we study algebraically Cayley Haar graphs in details. 
The main result will be an equivalent condition for $H(G,S)$ to be algebraically Cayley in terms of $G, S$ and $\aut G$ (Theorem~\ref{haarcayley1}).  
Obviously, the algebraically Cayley property of a Haar graph implies that it is a 
Cayley graph; the converse implication, however, does not hold in general (see Proposition~\ref{nonsplit}).  Nonetheless, it will be shown that every Cayley Haar graph is isomorphic to an algebraically Cayley Haar graph (see Corollary~\ref{cor}). 
This indicates that the algebraically Cayley Haar graphs play a crucial role towards the solution of Problem~\ref{prob1}. 

In Section~3 we turn to Problem~\ref{prob1} by which we focus our attention on 
dihedral groups. In this paper we denote by $D_{n}$ the dihedral group of order $2n$ 
where $n \ge 2$. Note that $D_2$ is the Klein group $\Z_2^2$.  
Using also the results of Section~2, we show that the dihedral 
groups, which are solutions to Problem~\ref{prob1}, are $\Z_2^2, D_3, D_4$ and $D_{5}$.

\section{Algebraically Cayley Haar graphs}

We start with a few basic properties of Haar graphs.

\begin{lem}\label{basics}
Let $G$ be a finite group and $S$ be a subset of $G$. 
\begin{enumerate}[(i)]
\item The Haar graph $H(G,S)$  is connected if and only if the set 
$S S^{-1}=\{s t^{-1} : s,t \in S\}$ generates $G$.
\item The Haar graph $H(G,S)\cong H(G,g S^\alpha h)$ for any $g,h \in G$ and 
$\alpha \in \aut G$. 
\item If $G$ is an abelian group, then $H(G,S)$ is a Cayley graph.
\end{enumerate}
\end{lem}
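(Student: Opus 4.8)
For part (i), the plan is to identify the connected component of a single vertex by tracing alternating walks and exploiting the action of $G_R$. The group $G_R \le \aut H(G,S)$ preserves the bipartition and acts transitively on the white vertices $G\times\{0\}$ and on the black vertices $G\times\{1\}$ separately; hence all components carry the same number of black vertices and it suffices to determine the black vertices in the component of $(1_G,1)$. A black vertex $(b,1)$ is adjacent exactly to the white vertices $(s^{-1}b,0)$ with $s\in S$, and each such white vertex is adjacent to the black vertices $(ts^{-1}b,1)$ with $t\in S$; thus one back-and-forth step multiplies the black label on the left by an element of $SS^{-1}$. Since $SS^{-1}$ contains $1_G$ and is inverse-closed, iterating shows that the black vertices reachable from $(1_G,1)$ are exactly $\langle SS^{-1}\rangle\times\{1\}$, so the number of components is $[G:\langle SS^{-1}\rangle]$ and the graph is connected iff $SS^{-1}$ generates $G$. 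Starting instead from a white vertex produces $S^{-1}S$; the subtlety worth flagging is that $\langle SS^{-1}\rangle$ and $\langle S^{-1}S\rangle$ need not coincide when $G$ is non-abelian, but they are conjugate (by any fixed $s_0\in S$ one checks $\langle SS^{-1}\rangle=s_0\langle S^{-1}S\rangle s_0^{-1}$), so one equals $G$ iff the other does and the criterion is unambiguous.

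For part (ii), I would assemble the isomorphism from three elementary building blocks and compose. First, for $\alpha\in\aut G$ the coordinatewise map $(x,i)\mapsto(x^\alpha,i)$ sends the edge $(g,0)\sim(sg,1)$ to $(g^\alpha,0)\sim(s^\alpha g^\alpha,1)$, using that $\alpha$ is a homomorphism, and so realizes $H(G,S)\cong H(G,S^\alpha)$. Second, translating only the black vertices, $(x,0)\mapsto(x,0)$ and $(x,1)\mapsto(gx,1)$, realizes $H(G,S)\cong H(G,gS)$. Third, the colour-swap $(x,0)\mapsto(x,1)$, $(x,1)\mapsto(x,0)$ realizes $H(G,S)\cong H(G,S^{-1})$, since the image of $(g,0)\sim(sg,1)$ is an edge with left multiplier $s^{-1}$. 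Right multiplication $S\mapsto Sh$ is then obtained from $Sh=(h^{-1}S^{-1})^{-1}$ by conjugating the left-translation map with the colour-swap, giving $H(G,S)\cong H(G,Sh)$. Composing the three moves in the order $\alpha$, then left-translate by $g$, then right-translate by $h$ yields $H(G,S)\cong H(G,gS^\alpha h)$.

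For part (iii), the idea is to enlarge $G_R$ by one extra involution so that the result acts regularly, then invoke Sabidussi's criterion. When $G$ is abelian, inversion is an automorphism, which lets the colour-swapping involution $\theta:(x,0)\mapsto(x^{-1},1)$, $(x,1)\mapsto(x^{-1},0)$ be an automorphism of $H(G,S)$: the image of $(g,0)\sim(sg,1)$ is $(g^{-1},1)\sim(s^{-1}g^{-1},0)$, where commutativity rewrites $(sg)^{-1}=s^{-1}g^{-1}$, and this is again an edge with left multiplier $s$. A direct check gives $\theta^{2}=\mathrm{id}$ and $\theta g_R\theta^{-1}=(g^{-1})_R$, so $\theta$ normalizes $G_R$ and $A:=\langle G_R,\theta\rangle=G_R\rtimes\langle\theta\rangle$ has order $2|G|=|V(H(G,S))|$. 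Since $G_R$ is transitive on each colour class and $\theta$ interchanges the two classes, $A$ is transitive; a transitive group whose order equals the number of vertices acts regularly, so $H(G,S)$ is a Cayley graph of $A$.

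I expect the only genuine obstacle to be the bookkeeping in part (i): getting the one-sided products on the correct side and recognizing that the two a priori different subgroups $\langle SS^{-1}\rangle$ and $\langle S^{-1}S\rangle$ yield the same connectivity condition. Parts (ii) and (iii) reduce to routine verifications once the correct candidate maps are written down, the one point deserving care in (iii) being the confirmation that $\theta$ \emph{normalizes} $G_R$, rather than merely belonging to the automorphism group.
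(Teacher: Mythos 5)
Your proofs of all three parts are correct. It is worth pointing out, however, that the paper never proves Lemma~\ref{basics} in-house: it cites Du and Xu for part (i), Lu for part (iii), and remarks that part (ii) ``follows at once'' from Theorem~\ref{haarcayley1}, so your write-up is a genuinely self-contained alternative rather than a reproduction of the paper's argument. In (i), your alternating-walk argument (each black-to-black step left-multiplies the label by an element of $SS^{-1}$, which contains $1_G$ and is inverse-closed) is the standard proof, and your observation that $\langle SS^{-1}\rangle$ and $\langle S^{-1}S\rangle$ are conjugate via any fixed $s_0\in S$ correctly disposes of the only real subtlety; the one implicit hypothesis is $S\neq\emptyset$, without which the statement degenerates, but this is built into the definition of a Haar graph as a cover of a dipole with $|S|$ parallel edges. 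In (ii), your decomposition into three elementary moves --- the coordinatewise automorphism, left translation of the black class, and the colour swap realizing $S\mapsto S^{-1}$ (from which right translation follows via $Sh=(h^{-1}S^{-1})^{-1}$) --- is exactly the direct verification that the paper's phrase ``follows at once'' alludes to. In (iii), your involution $\theta\colon (x,i)\mapsto (x^{-1},1-i)$ is precisely the map $\sigma$ constructed in the proof of Theorem~\ref{haarcayley1}, specialized to $g=1_G$ and $\alpha$ the inversion map (which is an automorphism exactly because $G$ is abelian, and then $gS^{\alpha}=S^{-1}$, $g^{\alpha}=g$, $\alpha^2=\iota_g=\mathrm{id}$); your regular group $G_R\rtimes\langle\theta\rangle$ of order $2|G|$ is the group $K$ of that proof. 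What your route buys is independence from the general theorem and from the cited literature; what the paper's route buys is brevity, since once Theorem~\ref{haarcayley1} is established, both (ii) and (iii) drop out as immediate corollaries of the same construction.
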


Part (i) was proved in \cite{DuX00} and part (iii) in \cite{Lu03}. 
Part (ii) follows at once from Theorem~\ref{haarcayley1} (see also \cite{LuWX06}).
\medskip

In this section we study Haar graphs which are also algebraic Cayley 
(see Definition~\ref{algcay}). The main result of the section is the following theorem, 
which gives an exact algebraic condition for a Haar graph to be algebraic Cayley. For an element $g \in G$ we denote by $\iota_g$ the inner automorphism of $G$ 
induced by $g,$ i.e.,  $x^{\iota_g}= g^{-1}x g$ for every $x \in G$.

\begin{thm}\label{haarcayley1}
The Haar graph $H(G,S)$ is algebraically Cayley if and only if $g S^\alpha =S^{-1}$ 
holds for some $g\in G$ and $\alpha\in \aut G,$ where $g^\alpha=g$, and $\alpha^2=\iota_g$.
\end{thm}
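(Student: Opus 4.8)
The plan is to analyze a regular subgroup $A \le \aut X$ containing $G_R$ directly. Since $|V(X)| = 2|G|$ and $A$ is regular, $|A| = 2|G|$, so $G_R$ has index $2$ in $A$; in particular $G_R \trianglelefteq A$. Because $G_R$ preserves the two parts $G\times\{0\}$ and $G\times\{1\}$ while $A$ is transitive on $V(X)$, there is a unique $\tau\in A$ with $(1_G,0)^\tau=(1_G,1)$, and this $\tau$ swaps the two parts; as $\tau\notin G_R$ we get $A=G_R\cup G_R\tau$ and $\tau^2\in G_R$. I would then set $g\in G$ by $\tau^2=g_R$ and let $\alpha\in\aut G$ be the automorphism induced by conjugation, i.e.\ $\tau^{-1}x_R\tau=(x^\alpha)_R$; here $\alpha$ is genuinely an automorphism of $G$ because conjugation by $\tau$ restricts to an automorphism of the normal subgroup $G_R\cong G$.

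For the forward direction I would first pin down $\tau$ explicitly. Writing $(x,0)=(1_G,0)^{x_R}$ and pushing through the conjugation rule gives $(x,0)^\tau=(x^\alpha,1)$, and then $(x,1)^\tau=(g\,x^\alpha,0)$ follows from $\tau^2=g_R$. Imposing $\tau^2=g_R$ on both parts yields, after cancelation, $x^{\alpha^2}=g^{-1}xg$ for all $x$ (that is, $\alpha^2=\iota_g$) and $g^\alpha=g$; these are purely group-theoretic consequences of $\tau$ being a well-defined involution modulo $G_R$. Finally I would use that $\tau$ preserves edges: the edge $(x,0)\sim(sx,1)$ maps to the pair $(x^\alpha,1)$, $(g\,s^\alpha x^\alpha,0)$, which is an edge precisely when $(s^\alpha)^{-1}g^{-1}\in S$. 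Running this over all $s\in S$ and using finiteness (the map $s\mapsto (s^\alpha)^{-1}g^{-1}$ is injective with image in $S$) gives $(S^\alpha)^{-1}g^{-1}=S$, equivalently $g\,S^\alpha=S^{-1}$. This produces the required $g$ and $\alpha$.

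For the converse, given $g\in G$ and $\alpha\in\aut G$ with $g^\alpha=g$, $\alpha^2=\iota_g$ and $g\,S^\alpha=S^{-1}$, I would define the permutation $\tau$ of $V(X)$ by $(x,0)^\tau=(x^\alpha,1)$ and $(x,1)^\tau=(g\,x^\alpha,0)$ and verify three things. First, $\tau$ is a graph automorphism: the computation above shows the image of $(x,0)\sim(sx,1)$ is an edge exactly when $(s^\alpha)^{-1}g^{-1}\in S$, which is guaranteed by $g\,S^\alpha=S^{-1}$. Second, $\tau^2=g_R$: applying $\tau$ twice and using $\alpha^2=\iota_g$ together with $g^\alpha=g$ collapses $(x,i)^{\tau^2}$ to $(xg,i)$. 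Third, $A=\langle G_R,\tau\rangle=G_R\cup G_R\tau$ has order $2|G|$ and is transitive on $V(X)$ (since $G_R$ is transitive on each part and $\tau$ interchanges them), hence regular; as $G_R\le A$, the graph is algebraically Cayley.

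The conceptual core is short---an index-$2$, part-swapping regular extension of $G_R$ is the same data as an involution-modulo-$G_R$ normalizing $G_R$---so the main obstacle is bookkeeping rather than ideas: keeping the right/left multiplication conventions straight so that the edge condition emerges exactly as $g\,S^\alpha=S^{-1}$ (and not a mirror-image variant with the factors or inverses misplaced), and confirming that the map induced by conjugation really lands in $\aut G$. Once the explicit formula for $\tau$ is fixed, the three defining relations of the theorem correspond one-to-one to ``$\tau$ respects edges'', ``$\tau^2\in G_R$ is consistent'', and ``$\alpha$ fixes $g$'', so the equivalence falls out.
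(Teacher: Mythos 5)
Your route is essentially the paper's: both directions turn on viewing an algebraically Cayley structure as an index-$2$ extension $\langle G_R,\tau\rangle$ with a part-swapping element $\tau$, reading off $\alpha$ from conjugation, $g$ from $\tau^2$, and $gS^\alpha=S^{-1}$ from edge preservation. Your forward direction is sound: $g^\alpha=g$ and $\alpha^2=\iota_g$ do follow purely group-theoretically from $(h^\alpha)_R=\tau^{-1}h_R\tau$ and $\tau^2=g_R$ (this is exactly the paper's computation), and your injectivity-plus-finiteness argument for $gS^\alpha=S^{-1}$ is a harmless variant of the paper's computation of the image of the neighbourhood $(S,1)$.

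There is, however, a genuine gap in your converse. After defining $\tau$ by $(x,0)^\tau=(x^\alpha,1)$, $(x,1)^\tau=(g x^\alpha,0)$ and checking that it is an automorphism with $\tau^2=g_R$, you assert $\langle G_R,\tau\rangle=G_R\cup G_R\tau$ and deduce regularity from the order count $2|G|$. That equality is not free: it is equivalent to the set $G_R\cup G_R\tau$ being closed under composition, i.e.\ to $\tau$ normalizing $G_R$. In your forward direction the relation $\tau^{-1}x_R\tau=(x^\alpha)_R$ was the \emph{definition} of $\alpha$, legitimate there because $G_R$ was already normal (having index $2$) in the given regular group $A$; in the converse there is no ambient group yet, so normalization must be verified from the explicit formula for $\tau$. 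Without it, $\langle G_R,\tau\rangle$ could a priori be a much larger transitive group with nontrivial point stabilizers, and the regularity conclusion collapses. The missing step is precisely the computation the paper performs: for every $h\in G$ one checks $\tau^{-1}h_R\tau=(h^\alpha)_R$ on both parts, using $\alpha^2=\iota_g$ and $g^\alpha=g$; only then do $\tau^2=g_R$ and normality give $|\langle G_R,\tau\rangle:G_R|=2$ and hence regularity. A second, purely presentational wrinkle: your formula $(x,1)^\tau=(gx^\alpha,0)$ in the forward direction already uses $\alpha^2=\iota_g$ (the direct consequence of $\tau^2=g_R$ is $(x,1)^\tau=(x^{\alpha^{-1}}g,0)$), so you should derive the two relations group-theoretically first, as you yourself indicate, and only then fix the formula for $\tau$.
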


\begin{proof} First, suppose some $g\in G$ and $\alpha \in \aut G$ satisfy the above conditions. Consider the mapping 
$$
\sigma: G\times\Z_2\to G\times\Z_2,\ (x,0)\mapsto (x^\alpha,1),\ (x,1)\mapsto(x^{\alpha^{-1}} g,0), 
$$
where $x^{\alpha^{-1}}=x^{(\alpha^{-1})}$. It is straightforward to check that $\sigma$ is bijective. We will show that $\sigma$ is an automorphism of $H(G,S)$ that swaps its partite sets. 

Recall that $(x,0) \sim (y,1) \Leftrightarrow yx^{-1}\in S$, while $(x,0)^\sigma \sim (y,1)^\sigma\Leftrightarrow (x^\alpha,1)\sim (y^{\alpha^{-1}} g,0) \Leftrightarrow x^{\alpha}( y^{\alpha^{-1}} g)^{-1}\in S$. Let us transform the last expression as follows: 
\begin{align*}
x^{\alpha}( y^{\alpha^{-1}} g)^{-1} &=x^\alpha g^{-1}(y^{\alpha^{-1}})^{-1}=x^{\alpha^2\alpha^{-1}} g^{-1}(y^{-1})^{\alpha^{-1}}=(g^{-1}xg)^{\alpha^{-1}}g^{-1}(y^{-1})^{\alpha^{-1}}\\
&=(g^{-1}x^{\alpha^{-1}}g)g^{-1}(y^{-1})^{\alpha^{-1}}=g^{-1}x^{\alpha^{-1}}(y^{-1})^{\alpha^{-1}}=g^{-1}(xy^{-1})^{\alpha^{-1}}.
\end{align*}

One can see that 
$g^{-1}(xy^{-1})^{\alpha^{-1}}\in S\ \Leftrightarrow
g^{-1}xy^{-1}\in S^\alpha\ \Leftrightarrow
 xy^{-1}\in gS^\alpha\ \Leftrightarrow
 yx^{-1}\in S,$ where we used $g S^\alpha=S^{-1}$ in the last step. Thus $\sigma$ is indeed an automorphism of $H(G,S)$. The partite set-swapping property is clear from the definition of $\sigma$. 

Hence the group $K=\langle\sigma, G_R\rangle$ acts transitively on the vertex set of  $H(G,S)$. We are going to show that $K$ is in fact regular. Note that for the above choice of $\sigma$ we have $\sigma^2: (x,0) \mapsto ( x^{\alpha\alpha^{-1}}g,0),\ (x,1)\mapsto( x^{\alpha^{-1}\alpha}g^\alpha,1)$, implying $\sigma^2=g_R$. In order to prove  $G_R \triangleleft K$ we will show that  $\sigma^{-1}h_R\sigma\in G_R$ for every $h\in G$.
\begin{align*}
&(x,0)\xmapsto[]{\sigma^{-1}}(x^\alpha g^{-1},1)\xmapsto[]{h_R}(x^\alpha g^{-1}h,1)\xmapsto{\sigma }(x(g^{-1}h)^{\alpha^{-1}}g,0)\\
&(x,1)\xmapsto[]{\sigma^{-1}}(x^{\alpha^{-1}} ,0)\xmapsto[]{h_R}(x^{\alpha^{-1}} h,0)\xmapsto{\sigma }(xh^\alpha,1).
\end{align*}

As one can see $(g^{-1}h)^{\alpha^{-1}}g=g^{-1}h^{\alpha^{-1}}g=h^{\alpha^{-1}\alpha^2}=h^\alpha$. 
Hence $\sigma^{-1}h_R\sigma=(h^\alpha)_R\in G_R$. This, together with $\sigma^2=g_R$ already implies that $|K:G_R|=2$. Therefore, $K \leq \aut H(G,S)$ is regular by the Orbit-stabilizer Lemma. It follows from Sabidussi Theorem, that $H(G,S)$ 
is a Cayley graph over $K$.

For the other implication suppose there exists $\sigma\in\aut H(G,S)$ such that $\langle G_R,\sigma\rangle$ is regular and $\sigma:(1_G,0)\mapsto(1_G,1)$. Then $\sigma^2\in G_R$, so there exists $g \in G$ satisfying $\sigma^2=g_R$. Note that $\sigma$ normalizes $G_R$, hence there exists $\alpha \in \aut G,$
well defined by $(h^\alpha)_R = \sigma^{-1} h_R \sigma$ for all $h \in H$.
In particular, $(g^\alpha)_R=\sigma^{-1}g_R\sigma=\sigma^2=g_R,$ and hence
$g^\alpha=g$. Then $(h^{\alpha^2})_R=\sigma^{-2}h_R\sigma^2=g_R^{-1}h_Rg_R=
(g^{-1}hg)_R$ $(\forall h \in G),$ implying $\alpha^2=\iota_g$.
 Also, $h_R \sigma=\sigma (h^\alpha)_R$ for all $h \in G$.

The neighbourhood of $(1_G,0)$ is $(S,1)$, thus the neighbourhood of $(1_G,0)^\sigma$ is $(S,1)^\sigma$, but since $(1_G,0)^\sigma=(1_G,1)$, we obtain that $(S,1)^\sigma=(S^{-1},0)$. Let $s\in S$. Then $(s,1)^\sigma=(1_G,1)^{s_R\sigma}=(1_G,1)^{\sigma (s^\alpha)_R}=(1_G,0)^{\sigma^2 (s^\alpha)_R}=(gs^\alpha,0)$. This shows that $(S,1)^\sigma=(gS^{\alpha},0),$ and we have obtained $S^{-1}=gS^\alpha$. 
This completes the proof of the theorem.
\end{proof}

\begin{cor}\label{hcomplement}The Haar graph $H(G,S)$ is algebraically Cayley if and only if $H(G,G\setminus S)$ is algebraically Cayley.
\end{cor}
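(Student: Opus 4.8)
The plan is to reduce everything to the algebraic criterion just established in Theorem~\ref{haarcayley1}, rather than manipulating automorphisms of the graphs directly. The criterion says that $H(G,S)$ is algebraically Cayley precisely when there exist $g\in G$ and $\alpha\in\aut G$ with $gS^\alpha=S^{-1}$, $g^\alpha=g$ and $\alpha^2=\iota_g$. The decisive observation I would make is that the two conditions $g^\alpha=g$ and $\alpha^2=\iota_g$ involve only $g$ and $\alpha$ and say nothing about $S$; only the single equation $gS^\alpha=S^{-1}$ refers to the subset. So the whole problem collapses to checking how that one equation behaves under complementation. I would also note at the outset that, since complementation is an involution on subsets of $G$ (that is, $G\setminus(G\setminus S)=S$) and the statement of the corollary is symmetric, it suffices to prove just one implication; the converse then follows by applying the same argument with $S$ replaced by $G\setminus S$.

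Concretely, I would assume $H(G,S)$ is algebraically Cayley and fix $g$ and $\alpha$ as supplied by Theorem~\ref{haarcayley1}. I would then claim that the very same pair $(g,\alpha)$ witnesses that $H(G,G\setminus S)$ is algebraically Cayley. The conditions $g^\alpha=g$ and $\alpha^2=\iota_g$ carry over verbatim, so the only thing left to verify is
\[
g(G\setminus S)^\alpha=(G\setminus S)^{-1}.
\]

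The verification of this identity is a short set-theoretic computation, and it is where all the content sits, though it is entirely routine. The point is that each of the three operations in play — applying $\alpha$, left-multiplying by $g$, and inverting — is a bijection of $G$, and every bijection of $G$ commutes with complementation. Thus $(G\setminus S)^\alpha=G\setminus S^\alpha$, then $g(G\setminus S^\alpha)=G\setminus gS^\alpha=G\setminus S^{-1}$ using $gS^\alpha=S^{-1}$, while on the other side $(G\setminus S)^{-1}=G\setminus S^{-1}$. Comparing the two gives the desired equality, so $H(G,G\setminus S)$ is algebraically Cayley by Theorem~\ref{haarcayley1}.

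I do not anticipate a genuine obstacle here: there is no delicate interplay between $g$, $\alpha$, and $S$ to disentangle, because the hypotheses on $g$ and $\alpha$ are independent of the subset and complementation passes cleanly through bijections. The only place demanding a moment of care is confirming that left-multiplication by $g$ genuinely commutes with complement (i.e.\ $g(G\setminus T)=G\setminus gT$ for any $T\subseteq G$), which holds exactly because $x\mapsto gx$ permutes $G$; once that is stated, the proof is essentially immediate and symmetric.
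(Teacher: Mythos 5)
Your proof is correct and follows essentially the same route as the paper's: both reduce to one implication by symmetry, reuse the same pair $(g,\alpha)$ from Theorem~\ref{haarcayley1}, and exploit the fact that the bijections $x\mapsto gx^\alpha$ and $x\mapsto x^{-1}$ commute with complementation to get $g(G\setminus S)^\alpha=G\setminus gS^\alpha=G\setminus S^{-1}=(G\setminus S)^{-1}$. The only cosmetic difference is that you spell out the three bijections separately and explicitly remark that the conditions on $g$ and $\alpha$ are independent of $S$, which the paper leaves implicit.
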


\begin{proof}
By symmetry it suffices to prove one implication only. Suppose $H(G,S)$ is algebraically Cayley. Let $g\in G$ and $\alpha\in \aut G$ be the elements given by Theorem \ref{haarcayley1}. Since the maps $x\mapsto gx^\alpha$ and $x\mapsto x^{-1}$ are $G\to G$ bijections, we have $g(G\setminus S)^\alpha=G\setminus gS^\alpha=G\setminus S^{-1}=(G\setminus S)^{-1}$. Hence $H(G,G\setminus S)$ is algebraically Cayley.
\end{proof}

Obviously, the algebraically Cayley property of a Haar graph implies that it is a Cayley graph. To see that the converse implication does not hold in general we have the following example.

\begin{prop}\label{nonsplit}
Let $G$ be a nonsplit metacyclic $p$-group for an odd prime $p$, $N$ be a subgroup 
of $G$ of index $p,$ and $x$ be any element in $G \setminus N$. Then the Haar graph 
$H(G,N \cup \{x\})$ is a Cayley graph which is not algebraically Cayley. 
\end{prop}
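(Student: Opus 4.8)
The plan is to prove the two assertions separately: first that $X=H(G,N\cup\{x\})$ is a Cayley graph, and then, using Theorem~\ref{haarcayley1}, that it is not algebraically Cayley.

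For the Cayley part I would work in explicit coordinates. Since $N$ is normal of index $p$ (maximal in the $p$-group $G$) and $x\notin N$, the image $\bar x$ generates $G/N\cong\Z_p$, so every element of $G$ is uniquely $x^{j}n$ with $j\in\Z_p$, $n\in N$. Writing the white vertex $(x^{j}n,0)$ as $w_{j,n}$ and the black vertex $(x^{j}n,1)$ as $b_{j,n}$, a direct computation (using $sx^{j}=x^{j}(x^{-j}sx^{j})$ and normality of $N$) shows that the edges are precisely $w_{j,n}\sim b_{j,n'}$ for all $n'\in N$ (a complete bipartite graph inside each coset $j$) together with the matching $w_{j,n}\sim b_{j+1,n}$. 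In these coordinates three maps are visibly automorphisms: $\mu_{n_0}\colon(\cdot)_{j,n}\mapsto(\cdot)_{j,nn_0}$ for $n_0\in N$ (this is just $(n_0)_R$), the coset shift $\nu\colon(\cdot)_{j,n}\mapsto(\cdot)_{j+1,n}$ of order $p$, and the layer swap $\sigma\colon w_{j,n}\mapsto b_{-j,n},\ b_{j,n}\mapsto w_{-j,n}$ of order $2$. One checks that $\mu_{n_0}$ commutes with both $\nu$ and $\sigma$, while $\sigma\nu\sigma^{-1}=\nu^{-1}$. Hence $A=\langle\mu_{n_0}\ (n_0\in N),\nu,\sigma\rangle\cong N\times D_p$ has order $2\lvert N\rvert p=2\lvert G\rvert$ and is transitive, so it is regular; by Sabidussi's theorem $X$ is a Cayley graph (in general $A\not\supseteq G_R$, which already foreshadows the second claim).

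For the second part, suppose $X$ were algebraically Cayley. By Theorem~\ref{haarcayley1} there are $g\in G$ and $\alpha\in\aut G$ with $gS^{\alpha}=S^{-1}$, $g^{\alpha}=g$ and $\alpha^{2}=\iota_g$, where $S=N\cup\{x\}$ and $S^{-1}=N\cup\{x^{-1}\}$. The first step is to pin down the combinatorics of $gS^{\alpha}=S^{-1}$. Both sides have $\lvert N\rvert+1$ elements, and $gN^{\alpha}$ is a coset of the index-$p$ subgroup $N^{\alpha}$, meeting the subgroup $N$ in either $0$, $\lvert N\rvert/p$, or $\lvert N\rvert$ elements. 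Since the only point of $S^{-1}$ outside $N$ is $x^{-1}$, a size count rules out the intermediate case and forces $gN^{\alpha}=N$; this yields $N^{\alpha}=N$, $g\in N$, and $gx^{\alpha}=x^{-1}$. In particular $\alpha$ stabilises $N$ and acts on $G/N\cong\Z_p$ as inversion. Next, the odd-order hypothesis lets me replace $\alpha$ by a genuine involution: choosing $h\in\langle g\rangle$ with $h^{2}=g$, one has $h^{\alpha}=h$ and $\iota_h$ commutes with $\alpha$, so $\beta:=\iota_h^{-1}\alpha$ satisfies $\beta^{2}=\iota_h^{-2}\alpha^{2}=1$, fixes $N$ setwise, and still inverts $G/N$; moreover $C_G(\beta)\le N$, since $\beta$ has no nontrivial fixed point on $G/N$.

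Forming $\widehat G=G\rtimes\langle\beta\rangle$ (of order $2\lvert G\rvert$ with $\lvert G\rvert$ odd), every involution lies in the coset $G\beta$ and all of them are conjugate to $\beta$, so the elements $t\in G$ with $t^{\beta}=t^{-1}$ number $\lvert G:C_G(\beta)\rvert$; the same count applied inside the $\beta$-invariant subgroup $N$ gives $\lvert N:C_G(\beta)\rvert=\lvert G:C_G(\beta)\rvert/p$ inverted elements in $N$. Were every inverted element contained in $N$ these two counts would coincide, which is impossible; hence there is $t\in G\setminus N$ with $t^{\beta}=t^{-1}$, giving a $\beta$-invariant cyclic subgroup $\langle t\rangle$ with $\langle t\rangle N=G$. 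The final step --- and the step I expect to be the main obstacle --- is to turn this inverting involution into a splitting of the metacyclic extension, contradicting the hypothesis that $G$ is nonsplit. Here non-abelianity is essential: for abelian $G$ inversion is an automorphism and every $H(G,S)$ is already algebraically Cayley (cf.\ Lemma~\ref{basics}(iii)), so the argument must exploit $\alpha^{2}=\iota_g\neq1$ together with the rigidity of metacyclic $p$-groups. Concretely, I would fix a characteristic cyclic normal subgroup $C$ with $G/C$ cyclic, track the action of $\beta$ on $C$ and on $G/C$, and use the element $t$ to produce a cyclic complement to $C$; ruling out the residual ``power obstruction'' $t^{\lvert G/C\rvert}\in C$ is exactly where the nonsplit hypothesis, via the explicit classification of metacyclic $p$-groups for odd $p$, must be invoked.
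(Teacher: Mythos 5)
Your proof of the ``not algebraically Cayley'' half is incomplete, and the missing step is exactly where the whole difficulty of the proposition is concentrated. The reduction you perform is correct and in fact cleaner than the paper's two-case analysis: the coset count forcing $N^{\alpha}=N$, $g\in N$, $gx^{\alpha}=x^{-1}$, hence $\bar\alpha=$ inversion on $G/N\cong\Z_p$, is fine, and so is the construction of the involution $\beta=\iota_h^{-1}\alpha\in\aut G$ and of the inverted element $t\in G\setminus N$. But at that point all you have produced is a nonsplit metacyclic $p$-group ($p$ odd) admitting an involutory automorphism, and the step that turns this into a contradiction --- ``turn this inverting involution into a splitting of the metacyclic extension'' --- is never carried out; you yourself flag it as the main obstacle and defer it to the classification of metacyclic $p$-groups. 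That implication (nonsplit $\Rightarrow$ no involution in $\aut G$) is a genuine theorem, not a routine verification: it is precisely Menegazzo's result \cite{Men93}, cited and used by the paper, that $\aut G$ is a $p$-group for such $G$. So as written your argument never reaches a contradiction. The irony is that with this single citation your proof closes immediately, and earlier than the paper's does: already $\bar\alpha=$ inversion has order $2$, while its order must divide the order of $\alpha$, which is a power of the odd prime $p$ --- contradiction; the whole $\widehat G=G\rtimes\langle\beta\rangle$ counting becomes unnecessary.

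There is also a flaw in the Cayley half. In your coordinates $g=x^{j}n$ the edge set is not the one you claim: the matching out of white coset $p-1$ is $w_{p-1,n}\sim b_{0,x^{p}n}$, not $w_{p-1,n}\sim b_{0,n}$, because $x^{p}\in N$ but $x^{p}\neq 1$ in general ($x$ is an arbitrary element of $G\setminus N$ and cannot be normalized to have order $p$). Consequently your $\nu$, and hence $\sigma$, are not automorphisms of $X$ as defined --- $\nu$ sends the matching edge $w_{p-2,n}\sim b_{p-1,n}$ to the non-edge $w_{p-1,n}\sim b_{0,n}$. The block structure you identified (a $2p$-cycle of $N_R$-orbits alternating complete bipartite graphs and perfect matchings) is correct, and the repair is to relabel the orbits first: each complete bipartite block is label-independent, so labels can be chosen making every matching the identity. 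This relabeling is exactly the step the paper performs before writing down its three automorphisms, after which your $\mu_{n_0},\nu,\sigma$ and the regularity of $\langle\mu_{n_0},\nu,\sigma\rangle\cong N\times D_p$ go through; but as stated the key verification fails.
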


\begin{proof}
Let $X=H(G,N \cup \{x\})$ and let $p^n$ be the order of $G$. 

We first prove that $X$ is a Cayley graph. 
Consider the $2p$ $N_R$-orbits on the vertex set of $X$. These orbits are joined by the edges of $X$ into a $2p$-cycle such that the subgraph induced by two consecutive 
orbits are either a complete bipartite graph $K_{p^{n-1},p^{n-1}},$ or a perfect matching $p^{n-1}K_2$. Moreover, the complete bipartite graphs and the perfect matchings alternate (see Fig.~2). This allows us to relabel the vertices of $X$ 
by the group $\Z_{2p} \times \Z_{p^{n-1}}$ such that two vertices 
$(i_1,j_1)$ and $(i_2,j_2)$ are connected if and only if:
$$
i_2-i_1 = 1 \text{ and } (i_1 \in \{0,2,\dots,2p-2\} \text{ or } j_1=j_2).
$$

\begin{figure}[!htb]
\begin{center}
\includegraphics[]{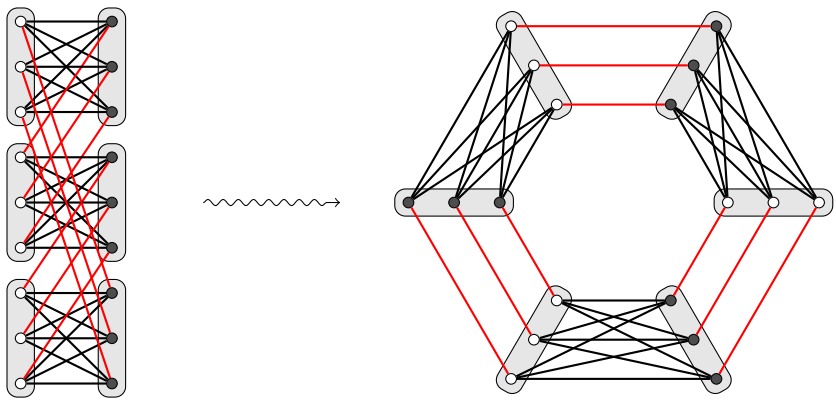}
\end{center}
\centering
\caption{The circular rearrangement of $N_R$-orbits unveils the symmetries $\alpha,\beta,\gamma$.}
\end{figure}

Define the permutations $\alpha, \beta$ and $\gamma$ of $\Z_{2p} \times \Z_{p^{n-1}}$ as for every $(i,j) \in \Z_{2p} \times \Z_{p^{n-1}},$
\begin{eqnarray*}
\alpha     &:& (i,j) \mapsto (i,j+1) \\
\beta       &:& (i,j) \mapsto (i+2,j) \\ 
\gamma  &:& (i,j) \mapsto (-i+1,j),
\end{eqnarray*}
where all operations are carried out in $\Z_{2p} \times \Z_{p^{n-1}}$.
It is straightforward to verify that each of $\alpha, \beta$ and $\gamma$ is 
an automorphism of $X,$  $\alpha$ commutes with both $\beta$ and $\gamma,$ 
and $\gamma^{-1} \beta \gamma=\beta^{-1}$.  All these imply that $X$ is a 
Cayley graph of the group $\big\langle \alpha,\beta,\gamma \big\rangle,$ the latter 
group being isomorphic to $\Z_{p^{n-1}} \times D_{p}$.  

Next we prove that $X$ is not algebraically Cayley. 
The order of a nonsplit metacyclic $p$-group 
was determined by Menegazzo \cite{Men93} for an odd prime $p;$ 
in particular, $\aut G$ is also a $p$-group.  This will be crucial in the argument below. 
By Theorem~\ref{haarcayley1}, assume, towards a contradiction, that 
there exist $y \in G$ and $\sigma \in \aut G$ such that $y^\sigma=y,$  
$\sigma^2=\iota_y,$ the inner automorphism of $G$ induced by  $y,$ and 
\begin{equation}\label{N}
y N^\sigma \cup \{ y x^\sigma \} = N \cup \{x^{-1}\}.
\end{equation}
It follows at once that $y=x^{-1}$ or $y \in N$.

Suppose that $y=x^{-1}$. The \eqref{N} reduces to $N^\sigma \cup \{x\} = xN \cup \{1\}$. This implies that, for any $z \in N, z \ne 1,$ $x z, x z^2$ are in $N^\sigma,$ 
and thus so is $z = (x z)^{-1} x z^2$. We obtain that $N^\sigma = N$. But then 
$N \cap x N \ne   \emptyset,$ a contradiction. 

Suppose that $y \in N$. Then $y N^\sigma= (y N)^\sigma = N^\sigma,$ and 
\eqref{N} shows that $|N^\sigma \cap N| \ge |N|-1,$ implying that $N^\sigma=N$. 
This means that $\sigma$ induces an automorphism of the factor group $G/N,$ let 
us denote this automorphism by $\bar{\sigma}$.  
Since $\aut G$ is a $p$-group, $\bar{\sigma}$ is of $p$-power order. 
Using this and that $G/N \cong \Z_p,$ we get that $\bar{\sigma}$ is the identity 
mapping. Therefore, $(N x)^{\bar{\sigma}}= N x,$ and so 
$y x^\sigma \in N x$. On the other hand, from \eqref{N} and since $N^\sigma=N,$
$y x^\sigma = x^{-1}$ follows. These imply that $x^{-1} \in N x,$ and so 
$x^2 \in N,$ which is a contradiction.  This completes the proof of the proposition.
\end{proof}

In our next proposition we generalize \cite[Proposition~4.6]{HlaMP02}.

\begin{prop}\label{cayhaarbip}
A Cayley graph $\cay(G,S)$ is a Haar graph if and only if it is bipartite. 
\end{prop}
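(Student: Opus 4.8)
The forward implication is immediate: by construction every Haar graph $H(K,T)$ has its vertex set partitioned into $K\times\{0\}$ and $K\times\{1\}$ with all edges running between these two sets, hence is bipartite; so any Cayley graph that happens to be a Haar graph must be bipartite. The content is the converse, and the plan is to reduce it first to the connected case. Writing $G_0=\langle S\rangle$, the connected components of $\cay(G,S)$ are precisely the right cosets $G_0 g$, and right translation $v\mapsto vg$ is an automorphism carrying the component $G_0$ isomorphically onto $G_0 g$; thus $\cay(G,S)$ is a disjoint union of $m=[G:G_0]$ copies of the connected graph $\cay(G_0,S)$, which is bipartite whenever $\cay(G,S)$ is.

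Second, I would treat the connected graph $\cay(G_0,S)$. Since $S=S^{-1}$ generates $G_0$, every element is a word in $S$, and I would define $\psi:G_0\to\Z_2$ by sending $g$ to the parity of the length of any such word. Bipartiteness guarantees that all walks between two fixed vertices have the same length parity, so $\psi$ is well defined; it is plainly a surjective homomorphism with $\psi(S)=\{1\}$. Its kernel $N$ is therefore an index-two subgroup with $S\cap N=\emptyset$. Fixing $x\in G_0\setminus N$, one checks $x^{-1}S\subseteq N$, and the map $(a,0)\mapsto a,\ (b,1)\mapsto xb$ is a graph isomorphism $H(N,x^{-1}S)\to\cay(G_0,S)$; verifying it respects adjacency is the short computation $xba^{-1}\in S \Leftrightarrow ba^{-1}\in x^{-1}S$.

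Finally, I would reassemble the disconnected graph. A disjoint union of $m$ copies of a Haar graph $H(K,T)$ is itself a Haar graph, namely $H(K\times C, T\times\{1_C\})$ for any group $C$ of order $m$: in this product two vertices are adjacent only within a fixed value of the $C$-coordinate, reproducing $m$ independent copies of $H(K,T)$. Taking $K=N$, $T=x^{-1}S$ and $|C|=m$ then exhibits $\cay(G,S)$ as a Haar graph. The main obstacle to watch is exactly this disconnected case: one cannot in general find an index-two subgroup of $G$ itself disjoint from $S$ (for instance $\cay(\Z_4,\{2\})\cong 2K_2$ admits no such subgroup, as its unique index-two subgroup contains $S$), so the semiregular group realizing the Haar structure must be built on the component group $\langle S\rangle$ rather than on $G$.
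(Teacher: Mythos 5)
Your proof is correct, and it follows the same two-stage skeleton as the paper's --- settle the connected case first, then absorb disconnectedness by crossing with a group of order $m=[G:\langle S\rangle]$ --- but both stages are carried out by genuinely different means. For the connected case the paper argues inside the automorphism group: writing $(\aut X)^+$ for the bipartition-preserving subgroup, it uses $|\aut X:(\aut X)^+|=2$ together with the fact that the regular copy of $G$ is not contained in $(\aut X)^+$ to conclude, via the product formula, that $G^+=G\cap(\aut X)^+$ has index two in $G$ and is semiregular with the two partite sets as its orbits, hence $X$ is a Haar graph over $G^+$. Your word-parity homomorphism $\psi$ manufactures exactly this subgroup ($\ker\psi$ is precisely the set of right translations preserving the bipartition), but by purely elementary means that make no appeal to $\aut X$, and you additionally write down the isomorphism $H(N,x^{-1}S)\cong\cay(G_0,S)$ that the paper leaves implicit in the step from ``semiregular with the partite sets as orbits'' to ``Haar graph over $G^+$''. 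For the disconnected case the paper defines an action of $K^+\times\Z_m$ on $G$ through a fixed system of coset representatives and checks by hand that it is a semiregular group of automorphisms with two orbits; your lemma that a disjoint union of $m$ copies of $H(K,T)$ equals $H(K\times C,T\times\{1_C\})$ encodes the same construction purely in terms of Haar presentations and is easier to verify. What the paper's route buys is the permutation-group formulation used throughout the paper (the Haar structure exhibited as a semiregular subgroup of $\aut X$, with $G^+$ sitting inside the given regular group), which is the form that feeds into Corollary~\ref{cor}; what yours buys is a self-contained, fully explicit argument, and your closing example $\cay(\Z_4,\{2\})\cong 2K_2$ is a genuine point, reflected in the paper's own two-case structure, about why one must build on $\langle S\rangle$ rather than on an index-two subgroup of $G$ itself. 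One caveat shared by both arguments: they tacitly assume $S\neq\emptyset$ (otherwise your $\psi$ is not surjective, and the paper's counting fails for the one-vertex component); indeed for $S=\emptyset$ and $|G|$ odd the edgeless graph $\cay(G,\emptyset)$ is bipartite yet has an odd number of vertices, so it is not a Haar graph at all --- a degenerate case clearly outside the intended scope of the proposition.
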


\begin{proof} The only if part is trivial. For the if part, let 
$X=\cay(G,S),$ $\aut X$ be the full automorphism group of $X,$ and let 
$(\aut X)^+$ be the group of automorphisms of $X$ that fix the bipartition classes. 

We suppose first that $X$ is a connected graph. In this case one can see that 
$|\aut X: (\aut X)^+ |=2$. By some abuse of notation we also denote by $G$  
the group consisting of the permutations in the form $x \mapsto x g,$ where $g$ 
runs over the set $G$. Then $G \not\le (\aut X)^+$, hence the product 
$G (\aut X)^+ = \aut X$. Now, 
\begin{align*}
\frac{|G| |(\aut X)^+|}{|G\cap(\aut X)^+|} &=|\aut X|,\\
|G:G\cap(\aut X)^+| &=|\aut X:(\aut X)^+|=2.
\end{align*}
Let $G^+=G\cap(\aut X)^+$. Since $G$ is regular, $G^+$ acts  semiregularly with the two orbits being the partite sets of $X$. Hence $X$ is a Haar graph over $G^+$. This settles the proposition for connected Cayley graphs.

Suppose next that $X$ is disconnected. Equivalently, $K$, the group  
generated by $S$ is a proper subgroup of $G$. Furthermore, $X$ consists of 
$|G:K|=m$ components, denoted by $X_0,\dots, X_{m-1}$, all isomorphic to $\cay(K,S)$. Let $r_0,r_2,\dots,r_{m-1}$ be a complete set of representatives of right cosets of $K$. 
It will be convenient to regard the indices of $r_i$'s as elements of $\Z_m$.
The vertex set $V(X_i) = K r_i$ for $i \in \Z_m$.
The Cayley graph $\cay(K,S)$ is connected and bipartite, and therefore, 
there exists a subgroup $K^+$ of $K$ (here $K$ is a 
permutation group acting on itself), which is semiregular, and the orbits of which are the partite sets of $\cay(K,S)$.  
Now, define the action of $K^+ \times \Z_m$ on 
$G$ by letting 
$$
g^{(k,j)} = (x^k)r_{i+j}  \stackrel{\rm def}{\iff} g=x r_i,  x \in K, i \in \Z_m,
$$
where the sum $i+j$ is from $\Z_m$. 
Observe that the above image of $g$ under $(k,j)$ is 
well-defined, because $g$ decomposes uniquely as $g= x r_i$ with $x \in K$ and $i \in \Z_m$. It can be easily checked that $K^+ \times \Z_m$ acts semiregularly on 
$G$ with two orbits. Denote also by $K^+ \times \Z_m$ the obtained permutation 
group of $G$. Let $[h,g]$ be an edge of $X$. Then  $h=xr_i$ and $g=sx r_i$ for 
some $x \in K,$ $i \in \Z_m$ and $s \in S$. Using that $S \subset K,$ we get that $[h^{(k,j)},g^{(k,j)}]$ is also an edge of $G$ for every 
$(k,j) \in K^+ \times \Z_m,$ and thus $K^+ \times \Z_m \le \aut X$. 
The subgroup $K^+$ have $2m$ orbits and these are exactly 
the partite sets of the components $X_0,\dots,X_{m-1}$. 
The group $K^+ \times \Z_m$ permutes these orbits  in two $m$-cycles, implying 
that $K^+ \times \Z_m$ has two orbits which are partite sets of $X$. 
Thus $X$ is a Haar graph of $K^+ \times \Z_m$.
\end{proof}

An immediate consequence of Proposition~\ref{cayhaarbip} is that 
every Cayley Haar graph can be interpreted  as an algebraically Cayley Haar graph. 
The precise statement is given below.

\begin{cor}\label{cor}
If a Haar graph $H(G,S)$ is a Cayley graph, then there exist a group $\tilde{G}$ and subset $\tilde{S}$ of $\tilde{G}$ such that $H(G,S) \cong H(\tilde{G},\tilde{S})$, and $H(\tilde{G},\tilde{S})$ is algebraically Cayley. 
\end{cor}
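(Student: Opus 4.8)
The plan is to take a Haar graph $H(G,S)$ that is a Cayley graph and produce, via Sabidussi's theorem, a regular subgroup of $\aut H(G,S)$ from which a new Haar presentation can be extracted. The key observation is that a Haar graph is always bipartite, and $G_R$ already acts as a semiregular group preserving the bipartition. So first I would invoke Sabidussi's theorem: since $H(G,S)$ is a Cayley graph, its automorphism group contains a regular subgroup $A$. This exhibits $H(G,S)$ as $\cay(A,T)$ for some connection set $T$, and being a Haar graph it is bipartite. Hence by Proposition~\ref{cayhaarbip} the Cayley graph $\cay(A,T)$ is a Haar graph, and in fact the proof of that proposition is constructive.

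Next I would apply the mechanism inside the proof of Proposition~\ref{cayhaarbip} to $X = \cay(A,T) = H(G,S)$. That proof shows how to locate a subgroup acting semiregularly with exactly the two partite sets of $X$ as its orbits: in the connected case it is $A^+ = A \cap (\aut X)^+$ of index $2$ in $A$, and in the disconnected case it is the group $K^+ \times \Z_m$ constructed there. Call this semiregular group $\tilde{G}$. Because its two orbits are precisely the bipartition classes of $X$, choosing an appropriate base vertex in each class lets us label $V(X)$ as $\tilde{G} \times \{0,1\}$, and the edges, being $\tilde{G}$-invariant, are determined by a subset $\tilde{S} \subseteq \tilde{G}$. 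Thus $X \cong H(\tilde{G},\tilde{S})$.

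It then remains to verify that this new Haar representation is algebraically Cayley, and here is where the argument closes on itself. The group $\tilde{G}$ was obtained as an index-$2$ (partite-set-preserving) subgroup of the regular group $A$. Since $A$ is regular on $V(X)$ while $\tilde{G}$ is semiregular with the bipartition classes as orbits, any element of $A \setminus \tilde{G}$ swaps the two partite sets, and conjugation by such an element normalizes $\tilde{G}$ inside $A$. This produces exactly a partite-set-swapping automorphism $\sigma$ together with the containment $\tilde{G}_R \le A = \langle \tilde{G}_R, \sigma\rangle$ acting regularly, which is precisely the definition of $H(\tilde{G},\tilde{S})$ being algebraically Cayley. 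Equivalently, one reads off from $A$ the data $g \in \tilde{G}$ and $\alpha \in \aut \tilde{G}$ of Theorem~\ref{haarcayley1}.

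I expect the main obstacle to be the bookkeeping in the disconnected case: verifying that the constructed group $\tilde{G} = K^+ \times \Z_m$ from Proposition~\ref{cayhaarbip} really can be simultaneously realized as a partite-set-preserving subgroup of some \emph{regular} group $A$ that contains a partite-swapping element, so that the algebraic-Cayley conclusion goes through uniformly. The cleanest route is probably to avoid reproving the semiregular construction and instead argue abstractly: start from the regular group $A$ guaranteed by Sabidussi, take $\tilde{G} = A^+$ as its partite-set-stabilizing subgroup (which has index $2$ and two orbits equal to the bipartition classes, exactly as in the connected case of Proposition~\ref{cayhaarbip}), and conclude algebraic Cayleyness directly from $\tilde{G} = A^+ \le A$ with $A$ regular. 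This sidesteps the connected/disconnected split entirely, since the only facts needed are that $X$ is bipartite, that $A$ is regular, and that $A^+$ has index $2$ with the partite sets as orbits.
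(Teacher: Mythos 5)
Your overall strategy is the same as the paper's: since a Haar graph is bipartite, a Cayley Haar graph is a bipartite Cayley graph, and the machinery of Proposition~\ref{cayhaarbip} produces a semiregular subgroup whose two orbits are the partite sets, hence a new Haar presentation $H(\tilde{G},\tilde{S})$; algebraic Cayleyness is then read off from the containment of $\tilde{G}$ in a regular group of automorphisms. For \emph{connected} $X$ your argument is complete: there $\tilde{G}=A^{+}=A\cap(\aut X)^{+}$ has index $2$ in the regular group $A$, its orbits are the partite sets, and $\tilde{G}_R\le A$ is exactly Definition~\ref{algcay}.

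The genuine gap is your final ``cleanest route,'' which you claim sidesteps the connected/disconnected split. The assertion that for \emph{any} regular subgroup $A$ the partite-set stabilizer $A^{+}$ has index $2$ with the partite sets as orbits is false when $X$ is disconnected: it is precisely connectedness that forces every automorphism to preserve or interchange the two partite sets. In a disconnected Haar graph an automorphism may preserve the bipartition on one component and swap it on another, and then $A^{+}$ collapses. Concretely, take $X=H(\Z_2,\{0\})\cong 2K_2$, with vertices $a_i=(i,0)$, $b_i=(i,1)$ and edges $[a_0,b_0]$, $[a_1,b_1]$. The permutation $\rho=(a_0\,a_1\,b_0\,b_1)$ is an automorphism, and $A=\langle\rho\rangle\cong\Z_4$ is regular; but no nontrivial power of $\rho$ preserves $V_0=\{a_0,a_1\}$, so $A^{+}$ is trivial, of index $4$, with singleton orbits. (The corollary still holds for this $X$, e.g.\ via the regular group $\Z_2^2$, but your argument cannot extract that from this $A$.) The repair is exactly the ``bookkeeping'' you deferred and then discarded: in the disconnected case follow the proof of Proposition~\ref{cayhaarbip} with $K=\langle T\rangle$, and note that the semiregular group $K^{+}\times\Z_m$ constructed there is contained in the group $K\times\Z_m$ acting by $xr_i\mapsto (xk)r_{i+j}$, which is itself a \emph{regular} group of automorphisms of $X$. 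So algebraic Cayleyness comes from this containment — one is free to exhibit a regular overgroup different from the one Sabidussi happens to give — and not from the arbitrary $A$, whose bipartition behaviour you cannot control.
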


We finish the section proving a property of groups all of whose  
Haar graphs are algebraically Cayley. 

\begin{prop}
Let $G$ be a finite group all of whose Haar graphs are 
algebraically Cayley. 
\begin{enumerate}
\item[(i)] If $H$ is a subgroup of $G$, then every Haar graph of $H$ is algebraically Cayley.
\item[(ii)] If $N$ is a characteristic subgroup of $G$, then every Haar graph $H(G/N,\mathcal{S})$ is algebraically Cayley.
\end{enumerate} 

\end{prop}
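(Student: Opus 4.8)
The plan is to prove both parts by transferring the algebraically-Cayley certificate supplied by Theorem~\ref{haarcayley1}, using the characterization $gS^\alpha=S^{-1}$ with $g^\alpha=g$ and $\alpha^2=\iota_g$. Throughout I treat a Haar graph of the smaller/quotient group, start from an arbitrary subset $S$, suitably enlarge it to a subset of $G$, apply the hypothesis to obtain $(g,\alpha)$, and then show this data restricts (or descends) to a valid certificate for the original subset.

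For part (i), let $H\le G$ and let $S\subseteq H$ be arbitrary. First I would view $S$ as a subset of $G$ and invoke the hypothesis: $H(G,S)$ is algebraically Cayley, so by Theorem~\ref{haarcayley1} there exist $g\in G$ and $\alpha\in\aut G$ with $gS^\alpha=S^{-1}$, $g^\alpha=g$, $\alpha^2=\iota_g$. The key observation is that since $S\subseteq H$ we have $S^{-1}\subseteq H$, and the equation $gS^\alpha=S^{-1}$ together with $\langle SS^{-1}\rangle$ controlling connectivity suggests $\alpha$ must carry $H$-data back into $H$. Concretely, I expect to argue that $g\in H$ (since $gS^\alpha=S^{-1}\subseteq H$ and $S^\alpha$ must then lie in $g^{-1}H$; combined with how $S$ generates $\langle S\rangle\le H$, one forces $g\in\langle SS^{-1}\rangle\cdot\text{(something in }H)$). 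The cleanest route is probably to restrict attention to the connected case where $\langle SS^{-1}\rangle=H$: then $\alpha$ preserves $\langle SS^{-1}\rangle=H$ because $S^\alpha=g^{-1}S^{-1}$ lies in the coset $g^{-1}H$, and taking differences $s^\alpha(t^\alpha)^{-1}=(st^{-1})^\alpha\in H$ shows $H^\alpha=H$, whence $\alpha|_H\in\aut H$ and $g\in H$. The restricted pair $(g,\alpha|_H)$ then satisfies exactly the three conditions of Theorem~\ref{haarcayley1} inside $H$, giving that $H(H,S)$ is algebraically Cayley. The disconnected case follows by replacing $H$ with $\langle SS^{-1}\rangle$.

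For part (ii), let $N$ be characteristic in $G$ and let $\mathcal{S}\subseteq G/N$ be arbitrary. I would pull $\mathcal{S}$ back to a subset $S\subseteq G$ that is a union of cosets of $N$, namely $S=\bigcup_{\bar s\in\mathcal{S}}\bar s$ (the full preimage). By hypothesis $H(G,S)$ is algebraically Cayley, yielding $g\in G$, $\alpha\in\aut G$ with $gS^\alpha=S^{-1}$, $g^\alpha=g$, $\alpha^2=\iota_g$. Because $N$ is characteristic, $N^\alpha=N$, so $\alpha$ descends to $\bar\alpha\in\aut(G/N)$ via $\bar x^{\,\bar\alpha}=\overline{x^\alpha}$. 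Writing $\bar g=gN$, the coset equation $gS^\alpha=S^{-1}$ projects to $\bar g\,\mathcal{S}^{\bar\alpha}=\mathcal{S}^{-1}$ in $G/N$ (here I use that $S$ was chosen $N$-saturated, so the projection of $gS^\alpha$ is exactly $\bar g\mathcal{S}^{\bar\alpha}$ with no collapse of distinct cosets). The remaining two conditions pass to the quotient directly: $\bar g^{\,\bar\alpha}=\overline{g^\alpha}=\overline{g}=\bar g$, and $\bar\alpha^{\,2}=\overline{\alpha^2}=\overline{\iota_g}=\iota_{\bar g}$. By Theorem~\ref{haarcayley1} applied in $G/N$, this shows $H(G/N,\mathcal{S})$ is algebraically Cayley.

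The main obstacle I anticipate is in part (i): justifying that the certificate $(g,\alpha)$ for the ambient group actually restricts to $H$, i.e.\ that $g\in H$ and $H^\alpha=H$. For connected Haar graphs this is forced by the connectivity criterion of Lemma~\ref{basics}(i) together with the coset structure of $gS^\alpha=S^{-1}$, but one must handle the disconnected case (where $\langle SS^{-1}\rangle\subsetneq H$) carefully, either by reducing to the connected subgraph over $\langle SS^{-1}\rangle$ or by noting that an arbitrary $S$ can be left-translated (via Lemma~\ref{basics}(ii)) to pass through $1_H$ without loss. Part (ii) is comparatively routine once one fixes the $N$-saturated lift, the only subtlety being to verify that projection does not merge distinct elements of $\mathcal{S}^{-1}$, which is precisely guaranteed by saturating $S$.
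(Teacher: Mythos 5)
Your part (ii) is correct and coincides with the paper's own proof: saturate $\mathcal{S}$ to the full preimage $NS$, apply Theorem~\ref{haarcayley1} to $H(G,NS)$, and descend the certificate through the projection, using that $N$ is characteristic so that $\alpha$ induces $\beta\in\aut(G/N)$. Part (i), however, contains a genuine gap --- in fact two. The smaller one: your claim that ``taking differences'' gives $(st^{-1})^\alpha\in H$ is false as stated. From $gS^\alpha=S^{-1}$ one only gets $s^\alpha=g^{-1}u$ and $t^\alpha=g^{-1}v$ with $u,v\in S^{-1}$, hence $(st^{-1})^\alpha=g^{-1}uv^{-1}g\in g^{-1}Hg$; the conjugation by $g$ does not cancel, so this argument yields $H^\alpha=g^{-1}Hg$ rather than $H^\alpha=H$. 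To repair it one must first prove $g\in H$, which requires the normalization $1\in S$ (then $g=g\cdot 1^\alpha\in gS^\alpha=S^{-1}\subseteq H$); that is, your two conclusions have to be established in the reverse order from the one you give.

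The serious gap is the disconnected case. Replacing $H$ by $L=\langle SS^{-1}\rangle$ proves only that $H(L,S)$ is algebraically Cayley: the certificate you obtain is $(g,\alpha|_L)$ with $\alpha|_L\in\aut L$. The statement concerns $H(H,S)$, and by Theorem~\ref{haarcayley1} that requires an automorphism of $H$ itself satisfying $hS^\beta=S^{-1}$, $h^\beta=h$, $\beta^2=\iota_h$. An automorphism of a proper subgroup $L$ need not extend to $H$, and a regular subgroup of $\aut H(L,S)$ containing $L_R$ does not canonically yield a regular subgroup of $\aut H(H,S)$ containing $H_R$: the graph $H(H,S)$ is a disjoint union of $|H:L|$ copies of $H(L,S)$, and transporting the certificate to the other components via coset representatives already violates the requirement $\sigma^2\in H_R$ unless the representatives centralize $g$. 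The paper sidesteps this with two ideas absent from your proposal: it first invokes Corollary~\ref{hcomplement} to assume $|S|\ge |H|/2$, and then applies the hypothesis not to $H(G,S)$ but to $H(G,tS)$ for a chosen $t\in G\setminus H$, deducing $S\subseteq K=H^{\alpha\iota_t}\cap H$ with $|H:K|\le 2$; the case $K=H$ yields the certificate $(gt^\alpha t,\,\alpha\iota_t|_H)$, while $|H:K|=2$ forces $S=K$ to be a subgroup, where the identity certificate works. Your connected-case idea can be salvaged into a correct alternative along the same lines --- with $|S|\ge|H|/2$ from Corollary~\ref{hcomplement} and $1\in S$ one gets $g\in S^{-1}\subseteq H$, $\langle S\rangle^\alpha=\langle S\rangle$, and $|H:\langle S\rangle|\le 2$, the index-two case forcing $S=\langle S\rangle=S^{-1}$ --- but without the complement reduction the generation hypothesis cannot be arranged, and as written the proof does not go through.
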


\begin{proof} \textit{(i)} Let $S$ be a subset of $H$ containing $1$. We will show that there exist $h\in H$ and $\beta\in\aut H$ such that $h^{\beta}=h$, $\beta^2=\iota_h$ and $hS^{\beta}=S^{-1}$. Because of Corollary~\ref{hcomplement}, it suffices to prove the existence of such $h$ and $\beta$ for $|S|\geq |H|/2$. Choose $t\in G\setminus H$ and apply Theorem~\ref{haarcayley1} for $H(G,tS)$. Thus we get elements $g\in G$ and $\alpha\in \aut G$ such that $g^\alpha=g$, $\alpha^2=\iota_g$ and 
\begin{align*}
gt^\alpha S^\alpha&=S^{-1}t^{-1},\\
gt^\alpha t S^{\alpha\iota_t}&=S^{-1}.
\end{align*}
The LHS of the last equation is contained in the coset $gt^\alpha t H^{\alpha\iota_t}$, while the RHS is a subset of $H$. Therefore, $S^{-1} \subseteq gt^\alpha t H^{\alpha \iota_t} \cap H =
gt^\alpha t(H^{\alpha \iota_t} \cap H),$ because
$gt^\alpha t \in S^{-1} \subseteq H$.
We obtain that $S^{-1}$ is contained in a left coset of the group
$K=H^{\alpha \iota_t}\cap H$. Namely, because of $1\in S$, we have $S\subseteq K$, which implies $|H|/2\leq |S|\leq |K|\leq |H|$. Hence $|H:K|$ is either 1 or 2. If $|H:K|=2$, then $S=K$, in this case one can choose $h=1$ and $\beta=id_H$. If $H=K$, then $\alpha\iota_t\in \aut H$. It
is straightforward to check that in this case $h=gt^\alpha t$ and $\beta=\alpha\iota_t$ satisfies the conditions.

\textit{(ii)} Let $\pi$ be the natural projection $\pi : G \to G/N, g \mapsto Ng$ (recall that the factor group $G/N$ consists of the $N$-cosets of $G$).  Fix a subset $S$ of $G$ such that 
$\mathcal{S} = \{N s : s \in S\}$.  The Haar graph $H(G,NS)$ is algebraic Cayley, 
hence by Theorem~\ref{haarcayley1}, there exist $g \in G$ and 
$\alpha \in \aut G$ such that $g(NS)^\alpha=(NS)^{-1}=NS^{-1}$, $\alpha$ fixes $g$ and $\alpha^2=\iota_g$, the inner automorphism induced by $g$. 

Since $N$ is a characteristic subgroup of $G,$ there is a unique automorphism  
$\beta \in \aut (G/N)$ such that the diagram below  commutes.

\begin{center}
\begin{tikzpicture}
\matrix (m) [matrix of math nodes,row sep=3em,column sep=4em,minimum width=2em]
  {
     G & G \\
     G/N & G/N \\};
  \path[-stealth]
    (m-1-1) edge node [left] {$\pi$} (m-2-1)
            edge [left] node [above] {$\alpha$} (m-1-2)
    (m-2-1.east|-m-2-2) edge 
            node [above] {$\beta$} (m-2-2)
    (m-1-2) edge node [right] {$\pi$} (m-2-2);
\end{tikzpicture}
\end{center}

\noindent 
Equivalently,  $(N x)^\beta=N x^\alpha$ for every $x \in G$. 
But then $N g \mathcal{S}^\beta=\{N g s^\alpha :s \in S\}= \{N s^{-1} : s \in S\}=\mathcal{S}^{-1}$, $(N g)^\beta=N g$ and $\beta^2=\iota_{Ng}$, and hence 
$H(G/N,\mathcal{S})$ is algebraically Cayley.
\end{proof}

\section{Cayley Haar graphs over dihedral groups}

In this section we solve Problem~\ref{prob1} for dihedral groups. 
Recall that, we denote by $D_{n}$ the dihedral group of order $2n$ for $n \ge 2$.
The main result here is the following theorem, which we are going to settle by the end of 
the section.

\begin{thm}\label{dihedral}
Each Haar graph of order $4n$, of the dihedral group $D_{n}$ of order $2n$, is a Cayley graph if and only if 
$n \in \{2,3,4,5\}$. 
\end{thm}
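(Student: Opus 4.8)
My plan rests on two reductions already available: Theorem~\ref{haarcayley1}, which recasts ``algebraically Cayley'' as the solvability of $gS^\alpha=S^{-1}$, and Corollary~\ref{cor}, which says that a Haar graph is a Cayley graph if and only if it is \emph{isomorphic} to an algebraically Cayley Haar graph of some group of order $2n$. Writing $D_n=\langle a,b\mid a^n=b^2=1,\ bab=a^{-1}\rangle$ and $S=\{a^j:j\in J\}\cup\{a^ib:i\in I\}$ with $J,I\subseteq\Z_n$, I first translate Theorem~\ref{haarcayley1} into arithmetic, using $\aut D_n=\{a\mapsto a^k,\ b\mapsto a^lb:\gcd(k,n)=1\}$. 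Since $S^{-1}$ inverts the rotation indices $J$ and fixes the reflection indices $I$, the equation $gS^\alpha=S^{-1}$ splits according to whether $g$ is a rotation or a reflection. For $g=a^m$ one obtains $k^2\equiv1$ together with the demands that $j\mapsto kj+m$ carry $J$ to $-J$ and $i\mapsto ki+(m+l)$ stabilise $I$; tracking the relations $m(k-1)\equiv0$, $l(k+1)\equiv-2m$ shows that the choice $k=1$ makes the reflection map the identity and asks only that $-J$ be a translate of $J$, while $k=-1$ trivialises the rotation map and asks only that $-I$ be a translate of $I$. For $g=a^mb$ one gets instead $k^2\equiv-1$, solvable only when $4\nmid n$ and every odd prime divisor of $n$ is $\equiv1\pmod4$, in which case the map interchanges rotations and reflections and can help only if $|I|=|J|$.

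For the sufficiency direction I would treat $n=2$ separately ($D_2\cong\Z_2^2$ is abelian, so Lemma~\ref{basics}(iii) applies) and reduce the cases $n\in\{3,4,5\}$ to a single clean fact: every subset $J\subseteq\Z_n$ with $n\le5$ is a translate of its negative $-J$. Granting this, the $k=1$ branch above (with $g=a^m$, $\alpha:a\mapsto a,\ b\mapsto a^{-m}b$, where $J+m=-J$) satisfies all three conditions of Theorem~\ref{haarcayley1} for \emph{every} $S$, because that branch fixes all reflections regardless of $I$. Hence every $H(D_n,S)$ is algebraically Cayley, and in particular a Cayley graph. The combinatorial fact is an elementary finite check (there are only two shapes of $3$-subset of $\Z_5$ up to translation, and both are reversible), so this direction should be short once the arithmetic dictionary is in place.

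For the necessity direction I would, for each $n\ge6$, exhibit a connected $S$ whose rotation- and reflection-index sets are both \emph{affinely rigid}: neither $-J$ nor $-I$ lies in the orbit of $J$, resp.\ $I$, under the admissible affine maps $x\mapsto kx+c$ with $k^2\equiv1$, and the swap-type maps of the $k^2\equiv-1$ case are likewise excluded. The prototype is $n=6$ with $J=I=\{0,1,3\}$: here $-\{0,1,3\}=\{0,3,5\}$ is not a translate of $\{0,1,3\}$, the only square roots of $1$ modulo $6$ are $\pm1$, and $k^2\equiv-1$ is unsolvable, so by the arithmetic above $H(D_6,S)$ is not algebraically Cayley. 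The general case requires a small combinatorial lemma guaranteeing such a rigid set for every $n\ge6$; since the number of square roots of $1$ grows with the number of prime factors of $n$, the explicit choice of $S$ may depend on the factorisation of $n$, and assembling it uniformly is one technical point to handle with care.

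The genuine obstacle is upgrading ``not algebraically Cayley over $D_n$'' to ``not a Cayley graph''. By Corollary~\ref{cor} I must rule out that $H(D_n,S)$ is isomorphic to an algebraically Cayley Haar graph of \emph{any} group of order $2n$ (the cyclic group $\Z_{2n}$, the group $D_n$ itself via a possibly non-affine isomorphism, and, for composite $n$, other groups). My plan is to exploit that a connected bipartite graph has a unique bipartition, so every automorphism preserves it and any regular subgroup $A\le\aut H(D_n,S)$ has an index-two subgroup $A^+$ fixing each part; thus $A^+$ lies in the part-preserving group $(\aut H(D_n,S))^+$ and has order $2n$. If I can show that the chosen $S$ is \emph{normal}, i.e.\ $(\aut H(D_n,S))^+=(D_n)_R$, then $A^+=(D_n)_R\le A$, forcing $H(D_n,S)$ to be algebraically Cayley---a contradiction. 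Establishing this normality, by proving that $S$ admits no part-preserving automorphism beyond the right translations, is the step I expect to be the main difficulty, and it is where a careful local analysis of the neighbourhoods of $H(D_n,S)$ (or an appeal to known descriptions of automorphism groups of bi-Cayley graphs of dihedral groups) will be required.
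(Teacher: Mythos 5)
Your sufficiency direction is correct, and it is essentially the paper's own argument in arithmetic form: your $k=1$ branch, namely $g=a^m$ and $\alpha\colon a\mapsto a,\ b\mapsto a^{-m}b$ with $J+m=-J$, is exactly the pair $(g,\alpha)$ constructed in the paper's Lemma~\ref{5valenthaar}, and your finite check that every subset of $\Z_n$ with $n\le 5$ is a translate of its negative plays the role that the normalization via Lemma~\ref{basics}(ii), the valency bound $|S|\le 5$, and the complementation argument of Corollary~\ref{hcomplement} play in the paper's Lemma~\ref{small}. The skeleton of your necessity direction also coincides with the paper's Proposition~\ref{dihhaarautmin}: there, too, the proof consists of (i) showing that the part-preserving automorphism group of a suitable $H(D_n,S)$ is just $(D_n)_R$, and (ii) showing that the condition of Theorem~\ref{haarcayley1} fails for that $S$; your way of combining (i) and (ii) (any regular subgroup $A$ would satisfy $A^+=(D_n)_R\le A$, contradicting (ii)) is valid, while the paper combines them to get the stronger conclusion that the graph is not even vertex-transitive.

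The genuine gap is that for necessity you have produced only the plan, and the two steps you defer are precisely where all the work lies. First, you exhibit a rigid voltage set only for $n=6$; for general $n$ your own framework exposes the obstacle: the affine maps $x\mapsto kx+c$ to be excluded are indexed by all square roots $k$ of $1$ modulo $n$, whose number grows with the number of prime divisors of $n$, and your prototype has $|I|=|J|$, which re-opens the swap case $k^2\equiv-1$ whenever that congruence is solvable. The paper resolves this with a single uniform set $S=\{1,a,a^3,b,ab,a^2b,a^4b\}$: the asymmetry $|S\cap\langle a\rangle|=3\ne 4=|S\setminus\langle a\rangle|$ eliminates reflection-type $g$ at once, and the rotation part $\{0,1,3\}$ defeats every admissible $k$ by an argument valid for all $n>24$. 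Second, and more seriously, the normality statement $(\aut X)^+=(D_n)_R$ --- which you yourself flag as the expected main difficulty --- is left entirely unproved; it is the technical core of the paper's proof (the analysis of the multiplicity sets $S_k$ of elements having exactly $k$ representations $xy^{-1}$ with $x,y\in S$, yielding that a point stabilizer acts faithfully on $\{a^{\pm3},a^{\pm4}\}$ and is then trivial, whence $|(\aut X)^+|=2n$). Note also that even with its carefully chosen $S$ the paper cannot push this analysis below $n=25$: the cases $6\le n\le 24$ (in particular $n=6,7$, where the $7$-element set is not used at all) are settled by machine computation (Remark~\ref{val6}, {\sc Sage} and {\sc Magma}), and your proposal offers no substitute for that. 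As it stands, you have proved the ``if'' direction, but the ``only if'' direction is not established for any $n\ge 6$, since even for $n=6$ the normality claim is missing.
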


We start with a lemma about generalized dihedral groups. We recall that, for  an 
abelian group $A,$ the \emph{generalized dihedral group} $D(A)$ is the group 
$\langle A,t \rangle,$ where $t$ is an involution not contained in $A$ and 
$x^t=x^{-1}$ for every $x \in A$ (cf. \cite[page 215]{Sco64}). 

\begin{lem}\label{5valenthaar}
Let $A$ be a finite abelian group. Then every Haar graph of $D(A)$ with valency 
at most $5$ is a Cayley graph.
\end{lem}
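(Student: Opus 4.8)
The valency of $H(D(A),S)$ is $|S|$, so I must treat all subsets $S$ with $|S|\le 5$, and the plan is to prove the stronger statement that each such Haar graph is \emph{algebraically} Cayley (this suffices, since an algebraically Cayley Haar graph is a Cayley graph). Write every element of $D(A)$ uniquely as $a$ or $at$ with $a\in A$, and split $S=S_0\cup S_1$, where $S_0=S\cap A$ are the \emph{rotations} and $S_1=S\setminus A$ the \emph{reflections}; set $k=|S_1|$. I will record two facts for repeated use: every reflection is an involution, so that $S^{-1}=S_0^{-1}\cup S_1$; and for each $e\in A$ the map $\alpha_e$ sending $a\mapsto a^{-1}$ for $a\in A$ and $t\mapsto et$ (equivalently $at\mapsto ea^{-1}t$) is an involutory automorphism of $D(A)$, a routine check using that $A$ is abelian.

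The first step is to reduce to $k\le 2$. Because $|S|\le 5$, we have $\min(|S_0|,|S_1|)\le 2$; and left multiplication by the reflection $t$ gives an isomorphism $H(D(A),S)\cong H(D(A),tS)$ by Lemma~\ref{basics}(ii), which interchanges rotations and reflections since $tb=b^{-1}t$ and $t(at)=a^{-1}$. Replacing $S$ by $tS$ if necessary, I may therefore assume the reflection part of the working set has size $k\le 2$.

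The second step applies Theorem~\ref{haarcayley1} with $g=1$ and $\alpha=\alpha_e$ for a suitable $e\in A$. Since $1^{\alpha_e}=1$ and $\alpha_e^2=\mathrm{id}=\iota_1$, the only condition left to satisfy is $S^{\alpha_e}=S^{-1}$. As $\alpha_e$ inverts $A$, it automatically carries $S_0$ to $S_0^{-1}$, so the whole problem reduces to choosing $e$ so that $\alpha_e$ permutes the at most two reflections in $S_1$: if $k=0$ take $e=1$; if $S_1=\{a_1t\}$ take $e=a_1^2$, fixing $a_1t$; and if $S_1=\{a_1t,a_2t\}$ take $e=a_1a_2$, which sends $a_1t\mapsto a_2t$ and $a_2t\mapsto a_1t$. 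In every case $S^{\alpha_e}=S_0^{-1}\cup S_1=S^{-1}$, so Theorem~\ref{haarcayley1} shows $H(D(A),S)$ is algebraically Cayley, hence a Cayley graph.

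The only genuinely delicate point is the case $k=2$, where the two reflections must be exchanged rather than fixed; the uniform choice $e=a_1a_2$ accomplishes this for an arbitrary abelian $A$, and verifying that $\alpha_{a_1a_2}$ is indeed an automorphism realising this swap is the main thing to get right. This is also where the numerical bound enters: the argument needs one of $S_0,S_1$ to have size at most $2$, which $|S|\le 5$ guarantees but $|S|=6$ (with $|S_0|=|S_1|=3$) does not. In that first failing valency the analogous requirement that a single $e\in A$ make $\alpha_e$ permute a three-element set of reflections — i.e.\ that $e$ invert the set of rotation parts $\{a_1,a_2,a_3\}$ onto itself — need no longer be solvable, which is precisely the phenomenon underlying the threshold $n\ge 6$ in Theorem~\ref{dihedral}.
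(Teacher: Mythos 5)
Your proof is correct, and it has the same skeleton as the paper's: split $S$ into its rotation part and reflection part, use Lemma~\ref{basics}(ii) (left translation by $t$) to make one of the two parts have at most two elements, and then exhibit an explicit pair $(g,\alpha)$ satisfying Theorem~\ref{haarcayley1}. The execution, however, is exactly dual to the paper's. The paper normalizes the \emph{rotation} side, assuming $|S\cap A|\le 2$ with $1\in S$, and takes $\alpha$ to fix $A$ pointwise while translating $t\mapsto at$, compensating with the nontrivial element $g=a^{-1}$; the commutativity of $A$ then makes the (possibly large) reflection part invariant automatically, but one must verify $g^\alpha=g$ and $\alpha^2=\iota_{a^{-1}}$. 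You instead normalize the \emph{reflection} side to size $k\le 2$ and take $\alpha=\alpha_e$ to invert $A$ while translating $t\mapsto et$; inversion handles the (possibly large) rotation part for free, the choice $e=a_1^2$ or $e=a_1a_2$ handles the at most two reflections, and since $g=1$ and $\alpha_e$ is an involution, the side conditions $g^\alpha=g$, $\alpha^2=\iota_g$ of Theorem~\ref{haarcayley1} hold trivially --- a slightly cleaner verification than the paper's. One cosmetic remark: after replacing $S$ by $tS$ you directly conclude only that $H(D(A),tS)$ is algebraically Cayley and hence that $H(D(A),S)$ is a Cayley graph (which is all the lemma needs); your opening claim that $H(D(A),S)$ itself is algebraically Cayley would need the extra observation that the isomorphism of Lemma~\ref{basics}(ii) conjugates $G_R$ to $G_R$. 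Your closing remark about valency $6$ is a fair heuristic for sharpness, though of course failure of this particular construction does not by itself prove non-Cayleyness; that is what Proposition~\ref{dihhaarautmin} is for.
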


\begin{proof}
Suppose we are given $H(D(A),S)$ with $|S| \leq 5$. 
Let $S_1 = S \cap A,$ and $S_2$ be the subset of $A$ that satisfies 
$S_2 t  = S \cap A t$.  At least one of $S_1$ and $S_2$ has at most $2$ elements, 
and by Lemma~\ref{basics}(ii), we may assume that $|S_1| \leq 2$ and $1 \in S_1$, where $1$ denotes the identity element of $D(A)$.  
In view of  Theorem~\ref{haarcayley1} it is sufficient to show the following: 
\begin{equation}\label{condition}
g S^\alpha = S^{-1} \text{ for some } g \in D(A) \text{ and } \alpha \in \aut D(A),  
\end{equation}
where $g^\alpha=g$ and $\alpha^2=\iota_g,$ the latter being the inner automorphism of $D(A)$ 
induced by $g$. 

If $S_1=\{1\},$ then $S=S^{-1}$. Thus \eqref{condition} holds by letting 
$g=1$ and $\alpha$ be the identity mapping. 

Let $S_1=\{1,a\}$. Let $g=a^{-1}$, and let $\alpha\in \aut D(A)$ be the automorphism of $D(A)$ defined by $x^\alpha=x$ for every $x \in A$ and $t^\alpha=a t$. Then 
$g S^\alpha=S^{-1}, g^\alpha=g, \alpha^2=\iota_{a^{-1}}$, and \eqref{condition} 
holds also in this case. The lemma is proved.
\end{proof}

\begin{rem}\label{val6}
Lemma~\ref{5valenthaar} is sharp in the sense that $5$ is the highest valency for which the statement holds. 
Using {\sc Sage} we determined the smallest dihedral Haar graph that is not a Cayley graph. It is not even vertex-transitive. It is isomorphic to
$$H(D_{6},\{1,a,a^3,b,ab,a^3b\}).$$
It seems to be the first element in an infinite series
of such graphs.  Our computations show that graphs $H(D_{n},\{1,a,a^3,b,ab,a^3b\}), 6 \leq n \leq 100$
are not vertex-transitive.
\end{rem}

\begin{lem}\label{small} 
All Haar graphs of the dihedral groups $D_3$, $D_4$ and $D_{5}$ are 
algebraically Cayley. 
\end{lem}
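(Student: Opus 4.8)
The goal is to prove Lemma~\ref{small}: every Haar graph of $D_3$, $D_4$ or $D_5$ is algebraically Cayley. By Theorem~\ref{haarcayley1}, this amounts to showing that for every subset $S$ of $D_n$ (with $n\in\{3,4,5\}$) there exist $g\in D_n$ and $\alpha\in\aut D_n$ satisfying $gS^\alpha=S^{-1}$, $g^\alpha=g$ and $\alpha^2=\iota_g$. Since $D_n$ is the generalized dihedral group $D(\Z_n)$, Lemma~\ref{5valenthaar} already disposes of every $S$ with $|S|\le 5$, and Corollary~\ref{hcomplement} lets me replace $S$ by its complement $D_n\setminus S$. The plan is therefore to reduce to a finite check on the remaining valencies and handle those directly.

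First I would record the size bookkeeping. For $n=3$ the group has order $6$, so every $S$ satisfies $\min(|S|,6-|S|)\le 3\le 5$, and Lemma~\ref{5valenthaar} together with Corollary~\ref{hcomplement} finishes the case immediately. For $n=4$ the order is $8$; by taking complements I may assume $|S|\le 4$, and again Lemma~\ref{5valenthaar} applies to all such $S$. So both $D_3$ and $D_4$ follow with essentially no further work. The only genuine case is $D_5$, of order $10$: complementation reduces to $|S|\le 5$, and then Lemma~\ref{5valenthaar} covers this as well. In other words, the entire lemma is a direct corollary of Lemma~\ref{5valenthaar} and Corollary~\ref{hcomplement}, since in each of these three groups every subset has size at most $5$ after possibly passing to the complement.

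To make this airtight I would state the uniform reduction as the core of the argument: for $|G|\le 10$, every subset $S$ has $\min(|S|,|G|-|S|)\le 5$, so one of $H(G,S)$ or $H(G,G\setminus S)$ is covered by the valency-$\le 5$ result, and then Corollary~\ref{hcomplement} transfers the algebraically Cayley property back. Because $|D_3|=6$, $|D_4|=8$ and $|D_5|=10$ are all at most $10$, and each is a generalized dihedral group $D(\Z_n)$ to which Lemma~\ref{5valenthaar} literally applies, the conclusion is immediate. I would write this out in two or three sentences per group, or better, once in full generality.

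There is no serious obstacle here; the lemma is designed to be the easy companion to the sharp valency bound in Lemma~\ref{5valenthaar}. The only thing to be careful about is the arithmetic of the reduction — confirming that $\lfloor |G|/2\rfloor\le 5$ precisely when $|G|\le 11$, so that the three listed groups all qualify while $D_6$ (order $12$) does not, consistent with the counterexample noted in Remark~\ref{val6}. I would verify explicitly that Corollary~\ref{hcomplement} is symmetric (it is, by its own proof) so that covering either $S$ or its complement suffices, and that $D_n=D(\Z_n)$ really is an instance of the generalized dihedral construction with $A=\Z_n$, which is immediate from the definitions recalled before Lemma~\ref{5valenthaar}.
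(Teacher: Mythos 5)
Your proposal is correct and follows exactly the paper's own argument: since $|D_n|\le 10$ for $n\in\{3,4,5\}$, either $|S|\le 5$ or $|D_n\setminus S|\le 5$, so Lemma~\ref{5valenthaar} (applied to $D_n=D(\Z_n)$) handles one of $S$ or its complement, and Corollary~\ref{hcomplement} transfers the algebraically Cayley property back. The only point worth noting in both your write-up and the paper's is that one is really invoking the \emph{proof} of Lemma~\ref{5valenthaar}, which establishes the condition of Theorem~\ref{haarcayley1} and hence the algebraically Cayley property, not merely the Cayley property stated in that lemma.
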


\begin{proof}
Let $n\in \{3,4,5\}$ and let us consider the Haar graph $H(D_{n}, S)$. Then $|D_{n}|\leq 10$, implying that either $|S|\leq 5$ or $|D_{n}\setminus S|\leq 5$.
If $|S|\leq 5$, then $H(D_{n}, S)$ is algebraically Cayley by Lemma~\ref{5valenthaar}.
If $|S|> 5$, then $H(D_{n}, D_{n}\setminus S)$ is algebraically Cayley by Lemma~\ref{5valenthaar}. Hence $H(D_{n}, S)$ is algebraically Cayley by Corollary~\ref{hcomplement}.
\end{proof}

In view of the examples in Remark~\ref{val6} and of Lemma~\ref{small}, 
Theorem~\ref{dihedral} follows from the next proposition:   

\begin{prop}\label{dihhaarautmin}
Let $n > 7,$ $D_{n}=\langle a,b  \mid  a^n=b^2=1, bab=a^{-1} \rangle$ and let $S=\{1,a,a^3,b,ab,a^2b,a^4b\}$. Then $\aut H(D_{n},S)\cong D_{n}$.  
\end{prop}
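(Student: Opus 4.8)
The goal is to show that for $n>7$ the automorphism group of $X=H(D_n,S)$ with $S=\{1,a,a^3,b,ab,a^2b,a^4b\}$ is as small as possible, namely $\aut X\cong D_n$. Since $(D_n)_R\le \aut X$ always holds and $|(D_n)_R|=2n=|D_n|$, it suffices to prove the reverse inequality $|\aut X|\le 2n$; combined with identifying the structure this forces $\aut X=(D_n)_R\cong D_n$. The plan is to show that the stabilizer of a vertex in $\aut X$ is trivial, so that $\aut X$ acts \emph{semiregularly} on $V(X)$, and then to pin down its order. Because $X$ is bipartite with both partite sets of size $2n$, semiregularity together with the existence of $(D_n)_R$ already gives $|\aut X|\le 4n$; ruling out the partite-set-swapping symmetries (or showing they do not enlarge the group beyond $D_n$) brings this down to $2n$.

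The main tool will be a careful local analysis of the neighbourhoods. First I would record that the neighbourhood of $(1_G,0)$ is exactly $(S,1)$ and, more generally, the neighbourhood of $(x,0)$ is $(Sx,1)$. I would then study the \emph{bi-neighbourhood} structure, i.e. how many common neighbours two black (or two white) vertices share, encoded by the multiset $\{\,|Sx\cap Sy|\,\}$; equivalently one examines the differences $s^{-1}t$ for $s,t\in S$ and the frequencies with which each group element arises. The specific set $S$ is chosen so that this ``difference pattern'' is rigid: an automorphism fixing $(1_G,0)$ must permute $(S,1)$ among itself while preserving all higher common-neighbour counts, and I expect that the combinatorics of $S=\{1,a,a^3\}\cup\{b,ab,a^2b,a^4b\}$ forces this permutation to be the identity once $n$ is large enough to avoid small-$n$ coincidences (this is precisely where the hypothesis $n>7$ enters, excluding the sporadic extra symmetries that occur for small $n$ and for the non-Cayley examples of Remark~\ref{val6}).

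Concretely, I would split $S$ into its rotation part $S_1=\{1,a,a^3\}$ (contained in the cyclic subgroup $\langle a\rangle$) and its reflection part $S_2t$-type elements $\{b,ab,a^2b,a^4b\}$, and argue that any automorphism must respect this bipartition-within-a-fibre because the two types are distinguishable by the pattern of common neighbours. The asymmetry of $S_1$ (the gaps $1,2,3$ between exponents $0,1,3$ are all distinct) and of the exponent set $\{0,1,2,4\}$ in $S_2$ (whose difference multiset is rigid for $n>7$) should make the only length-preserving, difference-preserving relabelling the trivial one. I would carry this out by showing that the vertex stabilizer $(\aut X)_{(1_G,0)}$ fixes each neighbour $(s,1)$ individually, then propagate this rigidity outward along paths to conclude the stabilizer fixes everything, i.e. is trivial.

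The hard part, and the step I expect to absorb most of the work, is the \emph{combinatorial rigidity} of $S$: verifying that no nontrivial permutation of $S$ extends to a graph automorphism, uniformly for all $n>7$. This requires controlling the common-neighbour counts $|Sx\cap Sy|$ as functions of the difference $yx^{-1}\in D_n$ and showing the resulting ``signature'' distinguishes the seven neighbours of a vertex from one another. One must also handle the partite-swapping automorphisms via Theorem~\ref{haarcayley1}: showing that there is \emph{no} pair $(g,\alpha)$ with $g^\alpha=g$, $\alpha^2=\iota_g$ and $gS^\alpha=S^{-1}$ would simultaneously reprove that $X$ is not algebraically Cayley and confirm that $\aut X$ contains no order-swapping element beyond what $D_n$ already supplies, so that $\aut X\cong D_n$ exactly. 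Since $\aut D_n$ is explicitly $x\mapsto x$, $a\mapsto a^k$, $b\mapsto a^jb$ type maps, this last verification reduces to a finite case check over the possible images of $a$ and $b$, which the rigidity of $S$ should defeat for every $n>7$.
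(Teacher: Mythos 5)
Your overall skeleton is the same as the paper's: you kill the vertex stabilizer by exploiting the multiset of common-neighbour counts (the paper formalizes this as the sets $S_k$ of elements $d\in D_n$ expressible as $xy^{-1}$ for exactly $k$ pairs $(x,y)\in S\times S$, and shows the induced group on $D_n$ lies in $\aut\cay(D_n,S_k)$ for each $k$), and you dispose of the partite-set-swapping automorphisms exactly as the paper does, by observing that a swap would make $\aut X$ regular and then invoking Theorem~\ref{haarcayley1} to reduce to a finite check of pairs $(g,\alpha)$ with $gS^\alpha=S^{-1}$. So the route is right. The problem is that the two steps you yourself flag as ``the hard part'' are never carried out but only asserted with ``I expect'' and ``should,'' and in one important respect the expectation is false: the rigidity argument cannot be made uniform over all $n>7$. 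The difference pattern of $S$ degenerates for small $n$: for $n=8$ one has $a^4=a^{-4}$, merging frequency classes; for $n=14$ the coincidence $a^{14}=1$ is precisely one of the cases that must be excluded in the swap analysis; and the propagation step (from ``the stabilizer fixes the neighbours'' to ``the stabilizer is trivial'') in the paper runs along the cycles of $\cay(D_n,\{a^3,a^{-3}\})$ and $\cay(D_n,\{a^4,a^{-4}\})$ and needs $|\langle a^3\rangle\cap\langle a^4\rangle|>2$, which can fail unless $n>24$.

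This is exactly why the paper proves the uniform statement only for $n>24$ and settles the range $8\le n\le 24$ by a {\sc Magma} computation. Any honest completion of your plan must either reproduce that split (uniform argument for large $n$ plus finite verification for the small cases) or produce a genuinely sharper combinatorial argument than the paper's; as written, the claim that the exponent sets $\{0,1,3\}$ and $\{0,1,2,4\}$ are ``rigid for $n>7$'' is a gap, not a proof. A second, smaller omission: after showing the stabilizer acts trivially on the cyclic part $C=\langle a\rangle$, one still has to transfer this to $D_n\setminus C$ (the paper does this via $\gamma\in\aut\cay(D_n,\{b,a^7b\})$) and then separately rule out the order-two action inverting $a$; neither of these closing moves appears in your outline, and the second one is where the specific reflections $b,ab,a^2b,a^4b$ and the condition $n>24$ do real work.
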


\begin{proof}
We have checked the cases $n\leq 24$ using {\sc Magma} \cite{BosCP97}. Now, suppose $n>24$ 
and let $X=H(D_{n},S)$, let $V=D_{n} \times \{0,1\}$ and $V_i=D_{n}\times \{i\}$ where $i \in \{0,1\}$.
Let $(\aut X)^+$ be the setwise stabilizer of $V_0$ in $\aut X$. 
Fix $r \in \{0,1\}$. Define the action of the group $(\aut X)^+$ on $D_{n}$ by letting 
$$
x^\gamma = y \text{ if and only if } (x,r)^\gamma=(y,r) \;  
(x \in D_{n}, \, \gamma \in (\aut X)^+).
$$

Let $G$ be the permutation group of $D_{n}$ induced by the above action.
Note that $(D_{n})_R \le G,$ since the right regular action of $D_{n}$ fixes
the bipartition classes, where by some abuse of notation we shall denote also by $(D_{n})_R$ the group of all permutations $x \mapsto xd$ ($x\in D_{n}$)\medskip
				
\noindent{\bf Claim.} $G=(D_{n})_R$.
\medskip

For a positive integer $k,$ let 
$$
S_k = \{ d \in D_{n}: d=x y^{-1} \text{ for exactly $k$ pairs } (x,y) \in S \times S\}.
$$

In our case the following sets are obtained via direct computation: 
$S_1=\{a^4,a^{-4}\},$ $S_2=\{a^3,a^{-3},b,a^7 b\},$ 
$S_3=\{a,a^{-1},a^2,a^{-2}\},$ $S_4=\{ab,a^2b,a^3b,a^4b,a^5b\},$ 
$S_7=\{1\},$ and $S_k = \emptyset$ if $k \notin \{1,2,3,4,7\}$.
It can be easily seen that $G \le \aut \cay(D_{n},S_k)$ for every $k \in \{1,2,3,4\}$. 
Now let $C=\langle a\rangle$. Since $\langle S_3 \rangle =C,$ $G$ preserves the partition of 
$D_{n}$ into $C$ and $D_{n} \setminus C$.
Let $G_1$ be the stabilizer of $1$ in $G$. 
Then $G_1$ leaves $C$ setwise fixed, and applying this to $\aut \cay(D,S_2)$ 
gives us that $G \le \aut \cay(D_{n},\{a^3,a^{-3}\})$ and 
$G \le \aut \cay(D_{n},\{b,a^7b \})$ also hold.  

Choose an element $\gamma \in G_1$. 
Then $a^3$ and $a^{-3}$ are either fixed or switched by $\gamma$.
Suppose that $(a^3)^\gamma = a^3$. 
The component of $\cay(D_{n},\{a^3,a^{-3}\})$ containing the vertex $1$ is 
a cycle, and thus we find that $\gamma$ fixes every element of the subgroup 
$\langle a^3 \rangle$. Since $n > 24,$  
$|\langle a^3 \rangle \cap \langle a^4 \rangle| > 2$. 
Using this and that the component of $\cay(D_{n},\{a^4,a^{-4}\})$ containing $1$ is 
a cycle, $\gamma$ also fixes every element of 
$\langle a^4 \rangle$. Also, since for $i =1,2,$ 
$|\langle a^3 \rangle a^i \cap \langle a^4 \rangle| = 
|\langle a^3 \rangle  \cap \langle a^4 \rangle| > 2,$ we get eventually  
that $\gamma$ fixes every element of $C$. It follows from this and 
$\gamma \in \aut \cay(D_{n},\{b,a^7b\})$ that $\gamma$ fixes also 
every element in $D_{n} \setminus C,$ i.e., $\gamma=1$. It can be derived in the same 
manner that the condition $(a^4)^\gamma = a^4$ forces that $\gamma=1$. 
Therefore, we have shown that $G_1$ acts faithfully on the set 
$T=\{a^3,a^{-3},a^4,a^{-4}\},$ moreover, the permutation group of $T$ induced 
by $G_1$ is contained in the group $\langle (a^3,a^{-3})(a^4,a^{-4}) \rangle$. 

Thus if $G_1 \ne 1,$ then it is generated by an involution, say $\gamma$. 
Also, $|G : (D_{n})_R|=2,$ hence $\gamma$ normalizes $(D_{n})_R,$ and thus 
$\gamma \in \aut D_{n}$  such that $(a^3)^\gamma=a^{-3}$ and  
$(a^4)^\gamma=a^{-4}$. It follows that $a^\gamma = a^{-1}$. 
Then we can write $\{b, a^7b\} = \{b^\gamma, (a^7b)^\gamma \}=
\{b^\gamma,  a^{-7}b^\gamma\}$. Thus either 
$b^\gamma=b$ and $(a^7b)^\gamma= a^7b,$ from which $a^{14}=1,$ 
a contradiction, or $b^\gamma=a^7b$ and $(a^7b)^\gamma= b$. However, in this latter case using $\gamma\in\aut\cay(D_{n},S_4)$ and $\gamma\in G_1$, we get that  $(ab)^\gamma\in S_4\Leftrightarrow a^6\in\{a,a^2,a^3,a^4,a^5\}$, a contradiction because of $n>24$. Thus $G_1$ is trivial, and the claim follows.
\medskip

\noindent{\bf Claim.} $|(\aut X)^+|=2n$.
\medskip

Clearly, $|(\aut X)^+| \ge 2n,$ and equality holds exactly when the stabilizer of 
the vertex $(1,0) \in V$ in $(\aut X)^+$ is trivial. 
Let $\gamma \in (\aut X)^+$ with $(1,0)^\gamma = (1,0)$. 
By the previous claim $\gamma$ fixes the set $V_0$ pointwise, and it acts on 
$V_1$ as a permutation of the form $(x,1) \mapsto (xd,1)$ for some $d \in D_{n}$. 
This implies that  $S d = S,$ hence $S$ is a union of left cosets of the subgroup 
$\langle d \rangle$. It follows easily that $d=1,$ and so $\gamma=1$.
\medskip

\noindent{\bf Claim.} $\aut X=(\aut X)^+$.
\medskip

The index $|\aut X: (\aut X)^+| \le 2,$ hence if 
$\aut X \ne (\aut X)^+,$ then $\aut X$ is regular on $V$. 
In that case Theorem~\ref{haarcayley1} implies $S^{-1} =  d S^\sigma$ for some $d \in D_{n}$ and 
$\sigma \in \aut D_{n}$ such that $\sigma^2$ is the inner automorphism of $D_{n}$ 
induced by $d$. Denote by $\bar{\sigma}$ the automorphism of $C$ obtained 
by restricting $\sigma$ to $C$ (recall that $C$ is characteristic in $D_{n}$). 

It is obvious that $d \in S^{-1}$. Also, since 
$|S \cap C|=3$ and $|S \cap (D_{n} \setminus S)|=4,$ it follows that $d \in \{1,a^{-1},a^{-3}\}$.
Thus for every $x \in C,$ $x^{\sigma^2} = d^{-1}xd=x,$ i.e., 
$\bar{\sigma}$ is of order at most $2$. If $d=a^{-1}$, then $\{a^{\bar{\sigma}},
(a^{\bar{\sigma}})^3\}=\{a,a^{-2}\}$, thus either $a^3=a^{-2}$, or $a^{-6}=a$.
 If $d=a^{-3}$, then $\{a^{\bar{\sigma}},
(a^{\bar{\sigma}})^3\}=\{a^2,a^3\}$, thus either $a^9=a^2$, or $a^6=a^3$.
All of these cases can be quickly excluded using that the order of $a$ is $n > 24$. 

For the last remaining case let $d=1$. Then it follows that $x^{\bar{\sigma}}=x^{-1}$ for every $x \in C$ and 
$\{b,ab,a^2b,a^4b\}=\{b^\sigma,(ab)^\sigma,(a^2b)^\sigma,
(a^4b)^\sigma\}$. Suppose $b^\sigma=a^ib$ for some $i\in\{0,1,2,4\}$. Thus $(a^4b)^\sigma=a^{i-4}b\in \{b,ab,a^2b,a^4b\}$, which is only possible if $i=4$, using $n>24$. But then $(ab)^\sigma=a^3b\notin\{b,ab,a^2b,a^4b\},$ which is a contradiction. Hence the claim follows, completing the proof of the proposition. 
\end{proof}

We finish with another open question, placing Haar graphs into the more general class of vertex-trasitive graphs.

\begin{prob}\label{prob2}
Is there a non-abelian group $G$ and a set $S \subset G$ such that the Haar graph $H(G,S)$ is vertex-transitive but
non-Cayley?
\end{prob}

On a related note it might be interesting to mention \textit{quasi-Cayley} graphs, a class of vertex-transitive graphs that properly contains the class of Cayley graphs, shares many characteristics of the Cayley graphs, and is properly contained in the class of vertex-transitive graphs. Quasi-Cayley graphs were defined by Gauyacq \cite{Gau97} as follows:

\begin{defi} A graph $X$ is \textit{quasi-Cayley}, if the exists a regular family $\mathcal{F}$ of automorphisms, i.e., $\mathcal{F}\subseteq \aut X$ such that for all $u,v\in V(X)$ there exists a unique $\sigma\in\mathcal{F}$ such that $u^\sigma=v$.
\end{defi}

\begin{prop}Each vertex-transitive Haar-graph is quasi-Cayley.
\end{prop}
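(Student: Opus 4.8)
The plan is to exhibit an explicit regular family of automorphisms built from the semiregular group $G_R$ together with a single automorphism that interchanges the two colour classes. Write $V_0 = G\times\{0\}$ and $V_1 = G\times\{1\}$ for the partite sets of $X = H(G,S)$, and recall that $G_R \le \aut X$ is semiregular with orbits exactly $V_0$ and $V_1$; in particular $G_R$ restricts to a regular action on each of $V_0$ and $V_1$ separately. The identity belongs to $G_R$, so whatever family we build will contain it, as required.

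First I would secure the key ingredient: an automorphism $\tau\in\aut X$ with $V_0^\tau = V_1$ and $V_1^\tau = V_0$. When $X$ is connected this is immediate, since a connected bipartite graph has a unique bipartition, and vertex-transitivity supplies an automorphism carrying a vertex of $V_0$ to a vertex of $V_1$, which is then forced to swap the two classes. For the disconnected case I would note that vertex-transitivity makes all components pairwise isomorphic and each component itself vertex-transitive, hence connected bipartite vertex-transitive; applying the connected case inside each component produces a class-swap of that component, and the disjoint union of these component maps is a global $\tau$ with $V_0^\tau=V_1$. I expect this to be the main obstacle, precisely because the partition $\{V_0,V_1\}$ need not be preserved by all of $\aut X$ when $X$ is disconnected, so one cannot simply feed a transitive automorphism in directly and must argue component by component.

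With $\tau$ available I would set $\mathcal{F} = G_R \cup \tau G_R$, using the convention $u^{\tau g_R} = (u^\tau)^{g_R}$. Since $G_R$ fixes $V_0$ setwise while every element of $\tau G_R$ maps $V_0$ into $V_1$, the sets $G_R$ and $\tau G_R$ are disjoint, giving $|\mathcal{F}| = 2|G| = |V(X)|$. Because the cardinalities of $\mathcal{F}$ and $V(X)$ agree, regularity reduces to checking that for each fixed $u$ the evaluation map $\sigma \mapsto u^\sigma$ is a bijection onto $V(X)$: if $u\in V_0$, then $\{u^{g_R} : g\in G\}$ sweeps out $V_0$ by regularity of $G_R$ on $V_0$, while $\{(u^\tau)^{g_R}: g\in G\}$ sweeps out $V_1$ since $u^\tau\in V_1$ and $G_R$ is regular on $V_1$; the case $u\in V_1$ is symmetric. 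Hence every vertex $v$ is the image of $u$ under exactly one $\sigma\in\mathcal{F}$, which is precisely Gauyacq's condition, and $X$ is quasi-Cayley.

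In short, the only non-routine point is the construction of $\tau$ in the disconnected setting; once it is in place the regularity of $\mathcal{F}$ is a direct orbit count using the two regular actions of $G_R$, and no further verification is needed. I would not claim $\mathcal{F}$ is a group — indeed $\tau^2$ need not lie in $G_R$ — which is exactly what keeps the conclusion in the quasi-Cayley rather than the Cayley world.
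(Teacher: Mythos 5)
Your proposal is correct and follows exactly the paper's route: the paper also takes an automorphism $\sigma$ swapping the partite sets (asserting its existence directly from vertex-transitivity) and declares $\mathcal{F}=G_R\cup\sigma G_R$ to be a regular family, leaving the verification as ``straightforward to check.'' Your write-up merely supplies the details the paper omits, namely the component-by-component construction of the swapping automorphism in the disconnected case and the orbit-counting check of regularity.
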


\begin{proof}
Let a Haar graph $X=H(G,S)$ be vertex-transitive. Then there exists $\sigma\in \aut X$ which swaps the partite sets of $X$. It is straightforward to check that $\mathcal{F}=G_R\cup\sigma G_R$ is a regular family of automorphisms of $X$.
\end{proof}

\section*{Acknowledgement}

The authors are greatly indebted to Istv\'an Kov\'acs for the numerous useful suggestions and improvements. Furthermore, we would like to thank Robert Jajcay for bringing relevant examples and additional results to our attention.

\end{document}